\theoremstyle{plain}
\newtheorem{theorem}{Theorem}[section]
\newtheorem{proposition}[theorem]{Proposition}
\newtheorem*{theorem*}{Theorem}
\theoremstyle{definition}
\newtheorem{example}[theorem]{Example}
\newtheorem{remark}[theorem]{Remark}
\theoremstyle{remark}
 \gdef\Young#1{\hbox{$\vcenter
 {\mathcode`,="8000\mathcode`|="8000
  \def,{\global\advance\cols by 1 &}%
  \def|{\cr
        \multispan{\the\cols}\hrulefill\cr
        &\global\cols=2 }%
  \offinterlineskip\everycr{}\tabskip=0pt
  \dimen0=\ht\strutbox \advance\dimen0 by \dp\strutbox
  \halign
   {\vrule height \ht\strutbox depth \dp\strutbox##
    &&\hbox to \dimen0{\hss$##$\hss}\vrule\cr
    \noalign{\hrule}&\global\cols=2 #1\crcr
    \multispan{\the\cols}\hrulefill\cr%
   }
 }$}}
\gdef\Skew(#1:#2){\hbox{$\vcenter
{\mathcode`,="8000\mathcode`|="8000
  \dimen0=\ht\strutbox \advance\dimen0 by \dp\strutbox
  \def\boxbeg{\vbox
    \bgroup\hrule\kern-0.4pt\hbox to\dimen0\bgroup\strut\vrule\hss$}%
  \def\boxend{$\hss\egroup\hrule\egroup}%
  \def,{\boxend\boxbeg}%
  \def|##1:{\boxend\vrule\egroup\nointerlineskip\kern-0.4pt
    \moveright##1\dimen0\hbox\bgroup\boxbeg}%
  \def\\##1\\##2:{\boxend\vrule\egroup\nointerlineskip\kern-0.4pt
    \kern ##1\dimen0\moveright##2\dimen0\hbox\bgroup\boxbeg}%
  \moveright#1\dimen0\hbox\bgroup\boxbeg#2\boxend\vrule\egroup
 }$}}
 \newcommand{\op}{\operatorname}
\title{The branching models of Kwon and Sundaram via flagged hives}
\author[S. Kumar]{Sathish Kumar}
\address{Harish-Chandra Research Institute, Prayagraj, India}
\email{vsathishkumar@hri.res.in}
\author[J. Torres]{Jacinta Torres}
\address{Institute of Mathematics,  Jagiellonian University in Kraków, Poland}
\email{jacinta.torres@uj.edu.pl}
\date{March 2024}
\begin{document}
\begin{abstract}
We prove a bijection between the branching models of Kwon and Sundaram, conjectured previously by Lenart--Lecouvey. To do so, we use a symmetry of Littlewood--Richardson coefficients in terms of the hive model. Along the way, we obtain a new branching model in terms of \emph{flagged hives}. 
\end{abstract}
\maketitle

\section{Introduction}
In the representation theory of Lie algebras, branching problems study the restriction of a finite-dimensional irreducible highest-weight representation of a semisimple Lie algebra to nice subalgebras. Let $\mathfrak{g}$ be a semisimple Lie algebra and $\mathfrak{k}$ be a semisimple Lie subalgebra of $\mathfrak{g}$. Recall that the finite-dimensional irreducible highest-weight representations of $\mathfrak{g}$ are parametrized by dominant integral weights $\mathcal{P}^+(\mathfrak{g})$. Let $V(\nu)$ denote the irreducible highest weight representation indexed by $\nu \in \mathcal{P}^+(\mathfrak{g})$, and consider its restriction to $\mathfrak{k}$:

\begin{equation}\label{eq:branching-coeffs}
    \mathrm{res}^{\mathfrak{g}} _{\mathfrak{k}} V(\nu) = \bigoplus_{\mu \in \mathcal{P}^+ (\mathfrak k)} V(\mu) ^{\bigoplus c_{\mu} ^{\nu}}.
\end{equation}

 The coefficients $c_{\mu} ^{\nu}$ in \eqref{eq:branching-coeffs} are called branching coefficients. A combinatorial rule for computing these coefficients is called a branching rule. By a combinatorial rule we mean associating a combinatorial set (model) to each pair $(\nu, \mu)$ whose cardinality is $c^{\nu}_{\mu}$.
 \medskip

Throughout this paper, we fix $\mathfrak g$ to be the special linear Lie algebra $\mathfrak{sl}(2n, \mathbb C)$ and $\mathfrak{k}$ to be the symplectic Lie algebra $\mathfrak{sp}(2n, \mathbb C)$ (thought of as the fixed point subalgebra for the non-trivial Dynkin diagram automorphism of $\mathfrak{sl}(2n, \mathbb C)$). For the restriction problem in this case, various combinatorial models based on tableaux 
 are known. There is the classical model by Littlewood, in terms of Littlewood--Richardson tableaux, for the stable case \cite{littlewood1944invariant,littlewood1977theory}.
An elegant extension of that rule beyond the stable case was found by Sundaram \cite{sundaram1986combinatorics}. Later, a rule in terms of tableaux and Littelmann paths \cite{schumann2018non}, which was conjectured by Naito--Sagaki, was proven via its relationship to Sundaram's rule. There is another, more recent extension of the Littlewood branching rule by Kwon \cite{kwon2018combinatorial,kwon2018lusztig}, which is formulated in a more general context, for all classical types, using a combinatorial model for classical crystals known as the spinor model.
\medskip

In \cite{lecouvey2020combinatorics}, Lenart--Lecouvey use the branching models of Kwon and Sundaram to obtain combinatorial descriptions of generalized exponents in type $C_{n}$. They conjecture an explicit bijection between these two models which we intend to settle in this work. The main tool in our proof is the hive model for the Littlewood--Richardson coefficients and Gelfand-Tsetlin patterns \cite{berenstein1996canonical,henriques2006octahedron}. In fact we use the \textit{flagged hive model} studied in \cite{KRV, kundu2024saturation}, where they use the same to study saturation property for some structure constants using their connections to crystals.
\medskip

Another important element in our proof is a symmetry of Littlewood--Richardson coefficients by Kushwaha--Raghavan--Viswanath \cite{KRV}. The symmetry of Littlewood--Richardson coefficients via hive models was first studied in \cite{henriques2006octahedron} by Henriques--Kamnitzer referred to as the combinatorial $R$-matrix in \cite{lecouvey2020combinatorics}. In \cite{terada2018symmetry}, another symmetry was developed by Azenhas--King--Terada. It is an interesting question to ask whether these bijections coincide, or if they restrict to bijections between Kwon's and Sundaram's models for $c^\nu _\mu$. 

\subsection*{Organization of the paper}
 In $\S~\ref{sec:Notation}$ we fix the basic notations to work with. In $\S~\ref{sec:branching_models}$ we recall the branching models of Kwon (in fact, an equivalent formulation from \cite{lecouvey2020combinatorics}) and Sundaram in \cite{kwon2018combinatorial} and \cite{sundaram1986combinatorics}. We also state our main theorem in this section. In $\S~\ref{sec:hives}$ we recall and use the combinatorics of hives to spell out the proof.

\section{Notation}\label{sec:Notation}
A \emph{partition} is a non-increasing sequence of non-negative integers $\nu : = \nu_{1}\geq \nu_2 \geq \cdots $ such that $\nu_k =0$ for some $k \geq 1$. The maximal $j$ such that $\nu_{j} \neq 0$ is called the \emph{number of parts} or \emph{length} of $\nu$; we will denote this quantity by $\ell(\nu)$. We will abuse notation and denote a partition by $\nu = (\nu_1,..., \nu_{k})$ for $k \geq \ell(\nu)$. Given a partition $\nu$ we will often consider its Young diagram which is a left and top justified collection of boxes with $\nu_k$ many boxes in the $k^{th}$ row for all $k \in \mathbb Z _{>0}$. We will denote the Young diagram of $\nu$ again by $\nu$.  
\medskip

Let $\nu, \mu$ be partitions with $\mu \subset \nu$, (that is, the Young diagram of $\mu$ is a subset of the Young diagram of $\nu$, or equivalently, $\mu_{j} \leq \nu_{j}$ for all $ j \in \mathbb{Z}_{>0}$). A semi-standard tableau of (skew) shape $\nu/\mu$ is a function assigning a non-negative integer to each box of $\nu$ such that it is weakly increasing as we go from left to right along a row and strictly increasing as we go from top to bottom along a column, and such that it is constant and equal to $0$ precisely on the boxes corresponding to $\mu$. Usually, a positive integer $k$ will be fixed and $[0,k]:=\left\{0,1,...,k\right\}$ will be used as a co-domain for the filling function. In this case, we will denote the set of semi-standard tableaux of shape $\nu/\mu$ by $\op{SSYT}_{k}(\nu/\mu)$. The image of a box under this function will be simply referred to as the entry in the box. Whenever $\mu = (0)$, the set of semi-standard tableaux of shape $\nu/\mu$ are typically known as semi-standard Young tableaux of (straight) shape $\nu$, while semi-standard Young tableaux of shape $(m)^{d}/\nu$ are typically referred to as contretableux of shape $\nu'$, where $\nu'$ is the complement of $\nu$ in $(m)^d$ (for example, the contretableau in Example~\ref{ex:TC} has shape $(6,3,2,0)$). 
\medskip

A semistandard tableau of shape $\nu/\mu$ can equivalently be realized as a sequence of $k$ many partitions
\[\nu^{(0)} =\mu \subset \nu^{(1)} \subset \cdots \subset \nu^{(k)} = \nu,\]
\noindent
where, $\nu^{(i)}$ is defined to be the sub-shape of $\nu/\mu$ which is the pre-image of $[0,i]$. The semistandard-ness condition translates to the condition that $\nu^{(i+1)}/\nu^{(i)}$ is a horizontal strip (i.e., in any column of the Young diagram of $\nu^{(i+1)}$ there is at most one box of $\nu^{(i+1)}$ that is not a box of $\nu^{(i)}$). This is the key idea behind the definition of Gelfand-Tsetlin (GT) patterns (see \S~\ref{sec:GT patterns}) and more generally skew GT patterns (see \cite{kundu2024saturation}).
\medskip

The \textit{north-western row word}, a.k.a. the \textit{reverse row word} of a semi-standard tableau $T$ is obtained by reading the entries of its rows (excluding the entry $0$) right to left starting from the top row and proceeding downward. We will denote this word by $w(T)$ (see Example~\ref{ex:companion}). The \textit{content} of a semi-standard tableau $T$ is defined to be the content of its reverse row word. The content of a word $w$ is $\alpha := (\alpha_1, \alpha_2, \ldots)$ where, $\alpha_{i}$ equals the number of times $i$ appear in $w$. A \textit{Yamanouchi word} is a word $w = w_{1}\cdots w_{l}$ such that, for each $1\leq k \leq l$, the content of the subword  $w^{k} : = w_{1}\cdots w_{k}$ is a partition. 
\medskip

Fix a positive integer $k$. Let $L_k^*$ denote the free monoid of words in the alphabets $[1,k]$ with respect to 
the concatenation operation. The congruence relation generated by the following Knuth relations is called the plactic congruence, and the corresponding quotient monoid is called the plactic monoid:
\begin{itemize}
    \item [K1] \quad $xzy \equiv zxy$ if $x \leq y < z$
    \item [K2] \quad $yxz \equiv yzx$ if $x < y \leq z$
\end{itemize}
Two words are said to be Knuth equivalent if they are plactic congruent (i.e., they differ by a series of Knuth relations). 
\begin{theorem}\cite[Theorem~2.1]{fulton}
    Every word is Knuth equivalent to the word of a unique semistandard Young tableau.
\end{theorem}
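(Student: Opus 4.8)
The plan is to establish existence and uniqueness separately, bridging words and semistandard Young tableaux through Schensted's row-insertion. Recall the bumping procedure: for a semistandard Young tableau $T$ and a letter $a \in [1,k]$, the tableau $T \leftarrow a$ is formed by inserting $a$ into the first row, where it replaces the leftmost entry strictly greater than $a$ (and is appended at the right end if no such entry exists), after which the bumped letter is inserted into the second row by the same rule, and so on down the rows; the output is again semistandard. For a word $w = w_1 \cdots w_\ell$ set $P(w) := \varnothing \leftarrow w_1 \leftarrow \cdots \leftarrow w_\ell$, inserting the letters from left to right, so that $P(w) \in \op{SSYT}_{k}(\lambda)$ for a suitable partition $\lambda$. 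Throughout, the reading word of a tableau is taken in the convention of \cite{fulton}, reading its rows from the bottom row upward, each row left to right. The statement then splits into two claims: (i) every word $w$ is Knuth equivalent to $w(P(w))$; and (ii) if $T, T'$ are semistandard Young tableaux with $w(T) \equiv w(T')$, then $T = T'$.

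For (i), the heart of the matter is the \emph{compatibility lemma}: for every semistandard Young tableau $T$ and every letter $a$, one has $w(T)\,a \equiv w(T \leftarrow a)$. I would prove this by induction on the number of rows of $T$. The inductive step reduces to a one-row statement: if $R$ is a weakly increasing row and inserting $a$ into $R$ bumps out the letter $a'$ and leaves the new row $R'$, then --- splitting $R$ into its initial segment of entries $\le a$ and its final segment of entries $> a$ --- a short sequence of applications of the Knuth relations K1 and K2 rewrites the word $R\,a$ into the word $a'\,R'$; one then inserts $a'$ into the tableau formed by the remaining rows of $T$ and invokes the induction hypothesis. Granting the lemma, (i) follows by a second induction, on $|w|$: if $w = v a$ with $a$ its last letter, then $w = va \equiv w(P(v))\,a \equiv w(P(v) \leftarrow a) = w(P(w))$, using the induction hypothesis for $v$, then the compatibility lemma, then the definition of $P$.

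For (ii), I would establish two facts. First, $P(w(T)) = T$ for every semistandard Young tableau $T$: inserting the letters of $w(T)$ in reading order rebuilds $T$ one row at a time, since an entry of a given row, upon insertion, only ever bumps entries lying strictly lower in $T$ --- immediate from the strict increase of entries down the columns of $T$. Second, $P$ is constant on Knuth classes: since a single Knuth move replaces a consecutive length-three factor of a word, and since inserting the letters that follow such a factor depends on the preceding letters only through the tableau they produce, it suffices to check that inserting $xzy$ and inserting $zxy$ into a common tableau give the same result whenever $x \le y < z$, and likewise for $yxz$ versus $yzx$ whenever $x < y \le z$ --- a verification, uniform in the values involved, that the two bumping routes terminate at the same tableau. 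Combining the two facts: if $w(T) \equiv w(T')$, then $T = P(w(T)) = P(w(T')) = T'$, which is (ii).

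The main obstacle is the bookkeeping in those two local computations: the one-row rewriting inside the compatibility lemma, and the check that an elementary Knuth move leaves $P$ unchanged. Neither is deep, but both require care with the interplay between the weak and strict inequalities that distinguish K1 from K2 and the semistandardness conditions (weakly increasing along rows, strictly increasing down columns). An alternative would be to deduce, via Greene's theorem, that the shape of $P(w)$ and finer longest-increasing-subsequence statistics are Knuth invariants and to extract (ii) from that; I would nonetheless prefer the insertion-based route above, since it delivers (i) and (ii) with a single apparatus.
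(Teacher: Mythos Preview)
The paper does not supply a proof of this statement; it is simply quoted from Fulton as background. Your outline is correct and is essentially the standard argument found there: existence via the compatibility lemma $w(T)\cdot a \equiv w(T\leftarrow a)$ (reduced to the one-row rewriting you describe), and uniqueness via $P(w(T))=T$ together with the invariance of the insertion tableau $P$ under a single elementary Knuth move.

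One small caveat worth recording: you explicitly adopt Fulton's row word (rows read bottom to top, each left to right), whereas the paper's $w(T)$ is the reverse row word (rows top to bottom, each right to left). These two words are literal reversals of one another and are in general \emph{not} Knuth equivalent---for the tableau with rows $1\,2$ and $3$ they are $312$ and $213$, which lie in distinct plactic classes---so the two formulations assign different tableaux to a given Knuth class. Both versions of the theorem are true, and since the statement is a direct citation of Fulton your choice of convention is the appropriate one; just be aware that your argument, as written, proves Fulton's formulation rather than the one obtained by reading ``word'' in the paper's sense.
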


We now briefly recall the Sch\"utzenberger involution on the set of semi-standard Young tableaux. For $T \in \mathrm{SSYT}_k(\nu)$, if $w(T) = w_1 w_2 \ldots w_m$ then the Sch\"utzenberger involution $S(T)$ is the unique tableau in the plactic class of the word $S(w(T)) := w'_m \ldots w' _2 w'_1$ where, $w'_t$ denotes $k+1-w_t$. It is a well-known fact that the tableau $S(T)$ has shape equal to $\nu$ and the map $S$ is an involution on $\mathrm{Tab}_k(\nu)$. We remark here that the word $S(w(T))$ is the reading word of a contretableau $C$ of shape $\nu'$. Thus, $S(T)$ can be defined equivalently as the rectification of this contretableau, $\mathrm{rect}(C)$. (Rectification of a semistandard tableau $T$ is the unique semistandard Young tableau whose reverse row word is Knuth equivalent to that of $T.$ Refer \cite{fulton} for more details.)

\section{The branching models of Kwon and Sundaram}\label{sec:branching_models}

Let $\nu, \mu , \lambda$ be partitions, with $\lambda, \mu \subset \nu$. From now on we will fix a positive integer $n$, and assume, unless otherwise stated, that $\ell(\nu) \leq 2n-1$. A semi-standard tableau of shape $\nu / \mu$ and content $\lambda$ is said to be a Littlewood--Richardson (LR) tableau if its reverse row word is a Yamanouchi word. We denote the set of LR tableaux of shape $\nu/ \mu$ and content $\lambda$ by $\operatorname{LR}(\nu/\mu, \lambda)$. The numbers $c^{\nu}_{\mu,\lambda} := |\operatorname{LR}(\nu/\mu, \lambda)|$ are called the \emph{LR coefficients}.  Let $T_{\mu}$ be the unique semi-standard Young tableau of shape and content $\mu$.  We say that a semi-standard Young tableau $T$ is $\mu$-\emph{dominant} if it satisfies the following condition: 

\begin{align}
\label{companionprop}
w(T_{\mu})*w(T) \hbox{ is a Yamanouchi word}. 
\end{align}
We denote the set of all $\mu$-dominant semi-standard Young tableau of shape $\lambda$ and content $\nu - \mu$ by $\operatorname{LR^{\nu}_{\mu,\lambda}}$.

\begin{example}
\label{lrex}
Let $\nu = (5,3,1), \mu = (3,1), \lambda = (3,1,1)$. In this case, there is a unique Littlewood--Richardson tableau of shape $\nu/ \mu$ and content $\lambda$: 
\[T = \Skew(0: 0,0,0, 1,1|0:0, 1,2 |0: 3).\]
\end{example}

Now, from a Littlewood--Richardson tableau $T$ of shape $\nu/ \mu$ and content $\lambda$ one can easily obtain its \emph{companion tableau} $c(T)$ by placing in the $k$-th row of the Young diagram of $\lambda$ the numbers of the rows of $T$ containing an
entry $k$. 

\begin{example}
\label{ex:companion}
For $T$ as in Example \ref{lrex}, we have 
\[c(T) = \Skew(0:1,1,2|0: 2|0:3) \hbox{ and also }  T_{\mu} = \Skew(0:1,1,1|0:2), w(T_{\mu}) = 1112, w(T) = 21123.\]

Note that $w(T_{\mu})*w(T) =111221123 $ is a Yamanouchi word. 

\end{example}

The companion tableau $c(T)$ of $T$ is a $\mu$-dominant semi-standard Young tableau of shape $\nu$ and weight $\nu - \mu$. 
In fact, it is well known that the companion map induces a bijection 
\[ c: \operatorname{LR}(\nu/\mu, \lambda) \ \tilde{\longrightarrow} \ \operatorname{LR}^{\nu}_{\mu,\lambda}. \]
\subsection{Representation theory  and symmetry of LR--coefficients}
 
The set of dominant integral weights for the special linear Lie algebra $\mathfrak{sl}(2n,\mathbb{C})$ is in one-to-one correspondence with the set of integer partitions $\nu$ for which $\ell(\nu)\leq 2n-1$. Therefore the irreducible finite-dimensional highest weight representations of $\mathfrak{sl}(2n,\mathbb{C})$ are indexed by partitions $\nu$ for which $\ell(\nu) \leq 2n-1$. Let $V(\nu)$ be the finite-dimensional irreducible $\mathfrak{sl}(2n,\mathbb{C})$ module indexed by $\nu$. For partitions $\lambda$ and $\mu$ whose length is at most $2n-1$, the Littlewood--Richardson coefficients are the tensor product multiplicities defining the branching 

\[V(\lambda)\otimes V(\mu) \cong \bigoplus_{\nu} V(\nu)^{\bigoplus c^{\nu}_{\mu,\lambda}}.\]

By the symmetry of tensor products, it is then clear that $c^{\nu}_{\mu,\lambda} = c^{\nu}_{\lambda,\mu}$; this property is called the \emph{symmetry of Littlewood--Richardson coefficients}. In this work, we will recurr to a bijection $\operatorname{LR}^{\nu}_{\mu,\lambda} \overset{U}{\longleftrightarrow} \operatorname{LR}^{\nu}_{\lambda,\mu}$ from \cite{KRV} via the \emph{hive model} (cf. \S~\ref{sec:hives}). 

\subsection{Branching models}
The set of irreducible finite-dimensional highest weight representations for the symplectic Lie algebra $\mathfrak{sp}(2n,\mathbb{C})$ are indexed by partitions $\mu$ for which $\ell(\mu) \leq n$. Let $V^{\sigma}(\mu)$ denote the simple $\mathfrak{sp}(2n, \mathbb C)$ module of highest weight $\mu$. From now on, in this section, we fix $\nu$ to be a partition with at most $2n-1$ parts and $\mu$ a partition with at most $n$ parts. Consider the branching of $V(\nu)$ after restriction to $\mathfrak{sp}(2n,\mathbb{C})$:

\[\operatorname{res}^{\mathfrak{sl}(2n,\mathbb{C})}_{\mathfrak{sp}(2n,\mathbb{C})} V(\nu)=  \bigoplus_{\mu} V^{\sigma}(\mu) ^{\bigoplus c^\nu _\mu}.\]

A partition $\lambda$ is \emph{even} if $\lambda_{2i-1} = \lambda_{2i}$ for each $i \in \mathbb{Z}_{> 0}$. Let $\lambda$ be an even partition.

\subsubsection{Sundaram's branching model} We say that a Littlewood--Richardson tableau of shape $\nu/ \mu$ and content $\lambda$ satisfies the \textit{Sundaram} property, if, for each $i = 0,..., \frac{1}{2}\ell(\lambda)$, the entry $2i+1$ appears in row $n+i$ or above in the Young diagram of $\nu$. We denote the set of $ T \in \operatorname{LR}(\nu/\mu, \lambda)$ satisfying the Sundaram property by $\operatorname{LRS}(\nu/\mu, \lambda)$. 

\begin{example}
\label{ex:lrs}
Let $n = 3$. The tableau in Example \ref{lrex} satisfies the Sundaram condition, but 
\[\Skew(0: 0, 0, 0, 1,1|0:0, 1,2 |0: 0,3|0:1).\]
does not. 
\end{example}

The following theorem is due to Sundaram.

\begin{theorem}\cite[Theorem 12.1]
{sundaram1986combinatorics}
\label{sundaram}
The branching coefficient $c^\nu_\mu$ equals the cardinality of the set 
\[\operatorname{LRS}(\nu,\mu): = \bigcup \operatorname{LRS}(\nu/\mu, \lambda),\]
where the union is taken over all even partitions $\lambda$. 

\end{theorem}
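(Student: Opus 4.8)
The plan is to prove the theorem by a weight-preserving bijection that converts a $GL_{2n}$ tableau into a symplectic tableau together with a recording datum, arranged so that Sundaram's two conditions cut out the target set exactly. On the $\mathfrak{sl}(2n,\mathbb C)$ side, the character of $\operatorname{res}V(\nu)$ is $s_{\nu}$ evaluated on the $\mathfrak{sp}(2n,\mathbb C)$-torus, i.e. $\sum_{S\in\operatorname{SSYT}_{2n}(\nu)} z^{\operatorname{wt}(S)}$ under $z_{2i-1}\mapsto t_i$, $z_{2i}\mapsto t_i^{-1}$; thus a tableau of content $\gamma$ contributes $\prod_i t_i^{\gamma_{2i-1}-\gamma_{2i}}$, and only its \emph{signed content} $(\gamma_1-\gamma_2,\dots,\gamma_{2n-1}-\gamma_{2n})$ matters. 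On the $\mathfrak{sp}(2n,\mathbb C)$ side, by a classical theorem of King the symplectic tableaux of shape $\mu$ --- semistandard fillings in the ordered alphabet $1<\bar1<2<\bar2<\cdots<n<\bar n$ with every entry of row $k$ at least $k$ --- form a weight basis of $V^{\sigma}(\mu)$, so $\operatorname{ch}V^{\sigma}(\mu)=\sum_{P}t^{\operatorname{wt}(P)}$. It therefore suffices to produce, for each $\nu$, a bijection
\[
\operatorname{SSYT}_{2n}(\nu)\ \tilde{\longrightarrow}\ \bigsqcup_{\mu\,:\,\ell(\mu)\leq n}\Bigl(\{\text{symplectic tableaux of shape }\mu\}\ \times\ \operatorname{LRS}(\nu,\mu)\Bigr)
\]
under which $S$ and its symplectic component have equal signed content.

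I would build this bijection from Berele's symplectic insertion together with Sundaram's defect recording. Reading $S$ column by column from left to right, insert each column by Berele's algorithm: ordinary row bumping, except that an inadmissible occurrence of a barred letter triggers a contraction deleting a box. What remains after all columns are processed is a symplectic tableau $P$ of some shape $\mu$ with $\ell(\mu)\leq n$, while the contractions --- each carrying the relevant row and level data --- assemble, via a companion-type construction in the spirit of the bijection $c\colon\operatorname{LR}(\nu/\mu,\lambda)\ \tilde{\longrightarrow}\ \operatorname{LR}^{\nu}_{\mu,\lambda}$ recalled above, into a skew tableau of shape $\nu/\mu$ with Yamanouchi reverse row word. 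Two structural facts then identify the image with the target. First, the contraction data organizes into a Littlewood--Richardson tableau whose content $\lambda$ is \emph{even}, and the net effect of the contractions on the content of $S$ is to preserve the signed content --- so the bijection is weight-preserving and the second factor contributes $\operatorname{LRS}(\nu/\mu,\lambda)$. Second, the Sundaram bound --- the entry $2i+1$ appears in row $n+i$ or above --- is exactly the image, under this correspondence, of King's row condition on $P$: a contraction that would violate the row bound is precisely the one Berele's algorithm refuses because it would leave an inadmissible symplectic tableau. Invertibility follows by running insertions and contractions backwards, using confluence of jeu de taquin slides.

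The main obstacle is this last identification: showing \emph{with the right bookkeeping} that the recorded data really lies in $\operatorname{LRS}(\nu/\mu,\lambda)$ for an even $\lambda$, and that the Sundaram row bound matches King's row condition bit for bit. This requires tracking how row indices migrate as successive contractions delete boxes --- a single local statement about where a barred letter may sit has to be propagated through an entire chain of slides --- together with a plactic argument, in the style of the theorem of Fulton quoted above, that the recording word is Knuth-equivalent to a Yamanouchi word of even content independently of the order in which the columns are read. A less constructive alternative would take Littlewood's stable rule $c^{\nu}_{\mu}=\sum_{\lambda\text{ even}}c^{\nu}_{\mu,\lambda}$ (valid for $\ell(\nu)\leq n$) as known and, for $\ell(\nu)>n$, invoke King's modification rules for symplectic characters indexed by non-dominant sequences, showing that the straightening cancellations remove exactly the Littlewood--Richardson tableaux that fail the Sundaram condition; there the obstacle becomes exhibiting an explicit sign-reversing involution on the ``bad'' tableaux. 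I expect the insertion-theoretic route to be the more robust.
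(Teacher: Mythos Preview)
The paper does not prove this theorem at all: it is stated with the citation \cite[Theorem~12.1]{sundaram1986combinatorics} and used as a black box. There is therefore no proof in the paper to compare your proposal against.

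For what it is worth, your sketch follows the strategy of Sundaram's original thesis proof --- Berele's symplectic insertion together with a careful recording scheme for the contractions --- and you have correctly identified the delicate points (evenness of the recorded content, weight preservation, and the match between the row bound and King's admissibility condition). But as written it is a plan rather than a proof: you explicitly flag the bookkeeping of row indices through chains of contractions and the plactic argument for the recording word as obstacles you have not yet overcome. Since the present paper simply invokes Sundaram's result, a reader of this paper is not expected to supply any of this; if you want a self-contained argument you would need to carry out those verifications in full, essentially reproducing Sundaram's Chapter~12.
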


\subsubsection{Kwon's branching model}
A tableau of shape $\mu$ (recall that $\ell(\mu) \leq n$) is said to satisfy the \emph{Kwon property} if the entries in row $i$ are at least $2i-1$, for $i = 1,...,n$. Denote the subset of $\operatorname{LR}^{\nu}_{\lambda,\mu}$ consisting of tableaux $T$ such that their evacuation $S(T)$ satisfies the Kwon property by $LRK_{\lambda, \mu}^{\nu}$. 
\medskip

The following theorem is a reformulation of Kwon's branching rule by Lecouvey--Lenart \cite[Lemma 6.11]{lecouvey2020combinatorics}.
\begin{theorem}\cite{kwon2018combinatorial,lecouvey2020combinatorics}
\label{kwon}
    The branching coefficient $c^\nu_\mu$ equals the cardinality of  the set 

\[\operatorname{LRK}(\nu,\mu): = \bigcup LRK_{\lambda, \mu}^{\nu},\]

\noindent where, the union is taken over all even partitions $\lambda$. 
\end{theorem}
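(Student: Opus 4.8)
The statement is essentially a transcription of Kwon's branching rule into the tableau language of \S\ref{sec:branching_models}, so the plan is not to reprove that rule from scratch but to recall it and to invoke the dictionary worked out by Lecouvey--Lenart. First I would recall Kwon's spinor model from \cite{kwon2018combinatorial}, a combinatorial model for finite type $C_n$ crystals: there Kwon realizes $\operatorname{res}^{\mathfrak{sl}(2n,\mathbb C)}_{\mathfrak{sp}(2n,\mathbb C)} V(\nu)$ as an explicit $\mathfrak{sp}(2n,\mathbb C)$-crystal, so that $c^\nu_\mu$ equals the number of $\mathfrak{sp}(2n,\mathbb C)$-highest weight elements of weight $\mu$ in this crystal. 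Kwon moreover parametrizes these highest weight elements explicitly, in terms of Lusztig-type data (equivalently, a pair of tableaux) constrained by a dominance condition together with a row bound governed by the rank $n$.

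Second, I would invoke \cite[Lemma~6.11]{lecouvey2020combinatorics}, where Lecouvey--Lenart rewrite Kwon's parametrization in the following shape: an $\mathfrak{sp}(2n,\mathbb C)$-highest weight element of weight $\mu$ in the spinor crystal of $V(\nu)$ is the same datum as a choice of an even partition $\lambda$ together with a $\lambda$-dominant semistandard Young tableau $T$ of shape $\mu$ and content $\nu-\lambda$ whose Sch\"utzenberger evacuation $S(T)$ has the property that the entries of row $i$ are at least $2i-1$ for all $i=1,\dots,n$; that is, with an element of $LRK^{\nu}_{\lambda,\mu}$. Summing over all even $\lambda$ then yields $c^\nu_\mu=|\operatorname{LRK}(\nu,\mu)|$. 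Two small bookkeeping points I would spell out: the union defining $\operatorname{LRK}(\nu,\mu)$ is disjoint, since the tableaux attached to distinct even $\lambda$ have distinct contents $\nu-\lambda$; and $|LRK^{\nu}_{\lambda,\mu}|\le|\operatorname{LR}^{\nu}_{\lambda,\mu}|=c^{\nu}_{\lambda,\mu}=c^{\nu}_{\mu,\lambda}$ by the companion bijection and the symmetry of LR coefficients, so the totals are consistent.

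I expect the only real friction to be the reconciliation of conventions, not any new idea. Kwon's model is phrased in a reversed alphabet with highest- rather than lowest-weight conventions, which is (roughly speaking) the reason the Sch\"utzenberger involution $S$ --- equivalently, rectification of a contretableau, as recalled in \S\ref{sec:Notation} --- intervenes in the statement at all; one has to check that the ``$\ge 2i-1$'' bound of the Kwon property is exactly Kwon's rank-$n$ constraint after this reversal, and that the hypotheses $\ell(\nu)\le 2n-1$ and $\ell(\mu)\le n$ are the ones under which Lemma~6.11 is stated. A reassuring consistency check I would run along the way is the stable range $\ell(\nu)\le n$: there the content $\nu-\lambda$ is supported on $\{1,\dots,n\}$, so $S(T)$ only uses the letters $n,n+1,\dots,2n-1$, hence in row $i$ of $S(T)$ the entries are automatically $\ge n+i-1\ge 2i-1$; the Kwon property becomes vacuous, the union collapses to $\bigsqcup_{\lambda\text{ even}}\operatorname{LR}^{\nu}_{\lambda,\mu}$, and Theorem~\ref{kwon} specializes to Littlewood's classical branching formula $c^\nu_\mu=\sum_{\lambda\text{ even}}c^\nu_{\mu,\lambda}$, exactly as it must.
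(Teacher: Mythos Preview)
Your proposal is correct and matches the paper's treatment: the paper gives no independent proof of Theorem~\ref{kwon} at all, stating it as a reformulation of Kwon's branching rule and citing \cite{kwon2018combinatorial} together with \cite[Lemma~6.11]{lecouvey2020combinatorics}, which is exactly the route you outline. Your additional bookkeeping (disjointness of the union, and the stable-range sanity check recovering Littlewood's formula) goes slightly beyond what the paper records, but is consistent with it.
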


\subsection{Main result}
We state our main theorem below. 
\begin{theorem}
\label{maintheorem}
The composition
\[\operatorname{LR}(\nu/\mu,\lambda) \overset{c}{\longrightarrow} \operatorname{LR}^{\nu}_{\mu,\lambda} \overset{U}{\longrightarrow} \operatorname{LR}^{\nu}_{\lambda,\mu}\]

\noindent restricts to a bijection 

\[\operatorname{LRS}(\nu/\mu,\lambda)\tilde{\rightarrow} \operatorname{LRK}^{\nu}_{\lambda,\mu},\]

\noindent where $U: \operatorname{LR}^{\nu}_{\mu,\lambda} \tilde{\rightarrow} \operatorname{LR}^{\nu}_{\lambda,\mu}$ is the bijection by Kushwaha--Raghavan--Viswanath \cite{KRV}. (In fact, $U = \mathrm{rect}\circ C \circ \hat{P} \circ \varphi$; see Section \ref{sec:hives} for notation). Therefore, the above composition induces a bijection between $\operatorname{LRS}(\nu,\mu)$ and $\operatorname{LRK}(\nu,\mu)$.
\end{theorem}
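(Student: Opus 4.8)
The plan is to use flagged hives as an intermediary and to show that the Sundaram condition and the Kwon condition are two incarnations of one and the same \emph{flag inequality system} on hives, interchanged by the involution $\hat P$. First I would recall from \S~\ref{sec:hives} the ingredients making up $U$: the bijection $\varphi$ from $\mu$-dominant semistandard tableaux of shape $\lambda$ (that is, from $\operatorname{LR}^{\nu}_{\mu,\lambda}$) to hives with boundary data $(\lambda,\mu,\nu)$, together with the precise dictionary between the chain of partitions $\nu^{(0)}\subset\dots\subset\nu^{(k)}$ carried by such a tableau and the edge labels of the corresponding hive; the map $C$ extracting a contretableau from a hive; the rectification $\mathrm{rect}$; and the hive involution $\hat P$ realizing the symmetry $c^{\nu}_{\mu,\lambda}=c^{\nu}_{\lambda,\mu}$, so that $U=\mathrm{rect}\circ C\circ\hat P\circ\varphi$. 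Since $c$, $\varphi$, $\hat P$, $C$ and $\mathrm{rect}$ are already bijections between the full sets, the theorem reduces to matching up three subsets: $c(\operatorname{LRS}(\nu/\mu,\lambda))$ inside $\operatorname{LR}^{\nu}_{\mu,\lambda}$, a set of flagged hives inside all hives with boundary $(\lambda,\mu,\nu)$, and $\operatorname{LRK}^{\nu}_{\lambda,\mu}$ inside $\operatorname{LR}^{\nu}_{\lambda,\mu}$.

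Second, I would translate the Sundaram property. Under the companion map, the requirement that the entry $2i+1$ of $T\in\operatorname{LR}(\nu/\mu,\lambda)$ occur only in rows $\le n+i$ of $\nu$ is exactly the requirement that every entry in row $2i+1$ of the companion tableau $c(T)$ be $\le n+i$, i.e. an upper bound, by a staircase in $n$, on the entries in the odd rows of the shape-$\lambda$ tableau $c(T)$. Reading the partition $\nu^{(j)}$ off the Gelfand--Tsetlin pattern underlying $c(T)$ and feeding it through the hive-to-pattern dictionary, these upper bounds turn into a family of linear inequalities on a distinguished family of edges of the hive $\varphi(c(T))$, which I would take as the definition of a \emph{flagged hive} (in the spirit of \cite{KRV, kundu2024saturation}). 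This shows that $T\in\operatorname{LRS}(\nu/\mu,\lambda)$ if and only if $\varphi(c(T))$ is a flagged hive, and, summing over even $\lambda$ and invoking Theorem~\ref{sundaram}, it gives the flagged-hive branching model announced in the abstract.

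Third, I would translate the Kwon property on the $\operatorname{LR}^{\nu}_{\lambda,\mu}$ side. A tableau in $\operatorname{LR}^{\nu}_{\lambda,\mu}$ has shape $\mu$, hence so does $S(T)$, and the Kwon requirement that the entries of row $i$ of $S(T)$ be $\ge 2i-1$ says that the first column of $S(T)$ dominates the staircase $(1,3,5,\dots)$. Using the standard description of the evacuation of a rectification through the reverse--complement reading word, I would rewrite this as a staircase flag condition on the chain underlying $\mathrm{rect}(C(K))$, and hence, via the map $C$ and the hive-to-pattern dictionary, as a family of linear inequalities on a distinguished family of edges of the hive $K$. The crux of the argument, and the step I expect to be the main obstacle, is to verify that $\hat P$ carries the Sundaram flag inequalities on a hive with boundary $(\lambda,\mu,\nu)$ exactly onto these Kwon flag inequalities on its reflection: one must check that the rhombus inequalities cutting out the two flags have matching type under $\hat P$ and, above all, that the various index shifts --- the $n+i$, the $2i-1$, the length $\ell(\lambda)$, and the evenness of $\lambda$ --- are mutually consistent. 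I would reduce this to the defining rhombus inequalities of a hive together with the explicit combinatorial description of $\hat P$ given in \cite{KRV}.

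Finally, chaining the three identifications, $U\circ c=\mathrm{rect}\circ C\circ\hat P\circ\varphi\circ c$ restricts to a bijection $\operatorname{LRS}(\nu/\mu,\lambda)\xrightarrow{\ \sim\ }\operatorname{LRK}^{\nu}_{\lambda,\mu}$. Taking the union over all even partitions $\lambda$ and using Theorems~\ref{sundaram} and~\ref{kwon} to see that both sides have cardinality $c^{\nu}_{\mu}$ then yields the induced bijection $\operatorname{LRS}(\nu,\mu)\xrightarrow{\ \sim\ }\operatorname{LRK}(\nu,\mu)$, as claimed.
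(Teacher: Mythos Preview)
Your overall architecture is exactly that of the paper: pass through flagged hives, translate the Sundaram condition via the companion map into a flag on $c(T)$ and hence into a hive flag, then read off the Kwon condition on the other side. The companion-map step and the flagged-hive intermediary are precisely Propositions~\ref{prop_LRS_to_fl_tabs} and~\ref{prop_fl_hive_to_tab}, and your final ``chain the identifications'' paragraph matches Theorem~\ref{final}.

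There is one genuine misconception, however. You repeatedly describe $\hat P$ as a ``hive involution'' or ``reflection'' that carries one family of rhombus inequalities to another on the reflected hive. In this paper $\hat P$ is \emph{not} an involution on hives at all: it is the map sending a hive $h$ to the Gelfand--Tsetlin pattern $(\hat p_{i,j})$ obtained by taking northeast edge-differences, just as $P$ takes horizontal differences. There is no second hive in the argument; a single hive $h\in\mathrm{Hive}(\mu,\lambda,\nu)$ carries two GT patterns $P(h)$ and $\hat P(h)$, and it is from the latter that the Kwon-side tableau is read. Consequently the step you flag as ``the crux'' and ``the main obstacle'' --- matching flag inequalities under a hive reflection --- is not the right formulation, and the reductions you propose (to rhombus inequalities and an explicit description of $\hat P$ from \cite{KRV}) are aiming at the wrong target.

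What actually makes the Kwon-side translation go through cleanly is a fact you gesture at (``evacuation of a rectification through the reverse--complement reading word'') but do not state: for any GT pattern $P$ one has $S(T(P))=\mathrm{rect}(C(P))$ (Proposition~\ref{prop_SI}). Applying this with $P=\hat P(h)$ reduces checking the Kwon property on $S\bigl(\mathrm{rect}(C(\hat P(h)))\bigr)$ to checking it on $T(\hat P(h))$, a tableau whose row entries are read directly off the GT pattern $\hat P(h)$. At that point the flag condition on $h$ (vanishing content of the prescribed NE rhombi) becomes the chain of equalities $\hat p_{n-j,i-j}=\hat p_{n-j+1,i-j+1}$ together with $\hat p_{2n,n+t}=0$ (since $\ell(\mu)\le n$), and these are literally the Kwon bounds on $T(\hat P(h))$; this is Proposition~\ref{S}. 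Your plan to pull the Kwon condition back through rectification to ``the chain underlying $\mathrm{rect}(C(K))$'' would be much harder to carry out directly, because rectification does not respect the GT chain in any simple way; Proposition~\ref{prop_SI} is the shortcut that avoids this. Also note the boundary data: $\varphi$ sends $\operatorname{LR}^{\nu}_{\mu,\lambda}$ to $\mathrm{Hive}(\mu,\lambda,\nu)$, not $\mathrm{Hive}(\lambda,\mu,\nu)$ as you wrote.
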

In \cite{lecouvey2020combinatorics}, Lenart--Lecouvey conjectured a very similar bijection, induced by the composition

\[\operatorname{LR}(\nu/\mu,\lambda) \overset{c}{\longrightarrow} \operatorname{LR}^{\nu}_{\mu,\lambda} \overset{U'}{\longrightarrow} \operatorname{LR}^{\nu}_{\lambda,\mu},\]

where $U'$ is the symmetry defined by Henriques--Kamnitzer, also known as the combinatorial $R$-matrix.

We prove Theorem \ref{maintheorem} in Section \ref{sec:hives} and as a byproduct, we obtain a new branching model in terms of the \emph{flagged hive model} (see Remark~\ref{rem-hive-rest}).

\section{The flagged hive model}
\label{sec:hives}
In this section, we recollect the notions of Gelfand-Tsetlin patterns, hives and their connections with tableaux and use it to prove Theorem~\ref{maintheorem}.

\subsection{Gelfand-Tsetlin Patterns}\label{sec:GT patterns}
Fix a positive integer $m$. A Gelfand-Tsetlin (GT) pattern $P$ is a triangular array of numbers $(p_{i,j})_{\begin{smallmatrix}1\leq i \leq m,\\ 1 \leq j \leq i\end{smallmatrix}}$ such that 
\begin{equation} \label{eq_GT_inequality}
p_{i+1,j} \geq p_{ij} \geq p_{i+1,j+1}
\end{equation}
for all appropriate values of $i$ and $j$. 
\medskip

\begin{figure}[h]
    \centering
    \begin{tikzpicture}
    \node at (0.5,0.5*1.732) {$p_{4,1}$};
    \node at (1.5,0.5*1.732) {$p_{4,2}$};
    \node at (2.5,0.5*1.732) {$p_{4,3}$};
    \node at (3.5,0.5*1.732) {$p_{4,4}$};

    \node at (1,1.732) {$p_{3,1}$};
    \node at (2,1.732) {$p_{3,2}$};
    \node at (3,1.732) {$p_{3,3}$};

    \node at (1.5,1.5*1.732) {$p_{2,1}$};
    \node at (2.5,1.5*1.732) {$p_{2,2}$};
        
    \node at (2,2*1.732) {$p_{1,1}$};
\end{tikzpicture}
\begin{tikzpicture}
    \node at (-1,0.5*1.732) {$\null$};
    \node at (0.5,0.5*1.732) {$6$};
    \node at (1.5,0.5*1.732) {$3$};
    \node at (2.5,0.5*1.732) {$2$};
    \node at (3.5,0.5*1.732) {$0$};

    \node at (1,1.732) {$4$};
    \node at (2,1.732) {$3$};
    \node at (3,1.732) {$0$};

    \node at (1.5,1.5*1.732) {$3$};
    \node at (2.5,1.5*1.732) {$1$};
        
    \node at (2,2*1.732) {$1$};
\end{tikzpicture}
    \caption{Gelfand-Tsetlin patterns}
    \label{fig-GT}
\end{figure}

The inequalities in \eqref{eq_GT_inequality} implies that $P_k := (p_{k,1}, p_{k,2}, \ldots, p_{k,k},0,0, \ldots, 0)$ is weakly decreasing ($1 \leq k \leq m$). This in turn implies that an integral GT pattern $P$ (i.e., $p_{i,j} \in \mathbb{Z}_{\geq 0}$) is in fact a sequence of partitions $(0) \subset P_1 \subset P_2 \subset \ldots \subset P_m$ such that the length of $P_k$ is at most $k$ and the successive quotients $P_{k+1} / P_{k}$ are horizontal strips ($1 \leq k \leq m-1$). Recall from Section \ref{sec:Notation} that this defines a semi-standard Young tableau of shape $P_m$. We denote this semi-standard tableau by $T(P)$. Note that this map defines a bijection between GT patterns and semi-standard Young tableaux. Given a semi-standard Young tableau $R$, we will denote the associated GT pattern by $GT(R)$. 
\medskip

Given an integral GT pattern $P$, one could also define a contretableau $C(P)$ as follows:\\ First, let $k $ denote the largest part of $P_m$ (i.e., $k = p_{m,1}$). Then the sequence of partitions $\textbf{k}-\mathrm{rev}(P_m) \subset \textbf{k}-\mathrm{rev}(P_{m-1}) \subset \cdots \subset \textbf{k}-\mathrm{rev}(P_{1}) \subset \textbf{k}$ defines a contretableau, where $\mathrm{rev}(\gamma)$ denotes $(\gamma_m, \ldots, \gamma_1)$, the reverse of $\gamma = (\gamma_1, \ldots, \gamma_m)$ and $\textbf{k}$ denotes the partition $(k, \ldots, k)$ with $m$ parts. This is because, if $\lambda$ and $\mu$ are partitions with $\mu \subset \lambda$ then and if $k \geq \lambda_1$ then $\textbf{k}-\mathrm{rev}(\lambda) \subset \textbf{k}-\mathrm{rev}(\mu)$; moreover, it is easy to see with help of Young diagrams that if $\lambda/\mu$ is a horizontal strip, then so is $\textbf{k}-\mathrm{rev}(\mu) / \textbf{k}-\mathrm{rev}(\lambda)$.

\begin{example}
    \label{ex:TC}
Let $P$ be the GT pattern appearing in Figure~\ref{fig-GT}. We have $m = 4$, and $P_4 = (6,3,2,0), P_3 = (4,3,0,0), P_2 = (3,1,0,0), P_1 = (1,0,0,0)$. Then 

\begin{align*}
T(P) = \Skew(0:1,2,2,3,4,4|0:2,3,3|0:4,4) \hbox{ and } C(P) = \Skew(0:0,0,0,0,0,0|0:0,0,0,0,1,1|0:0,0,0,2,2,3|0:1,1,2,3,3,4).
\end{align*}

Recall the Sch\"utzenberger involution S on semi-standard Young tableaux defined in Section \ref{sec:Notation}. It is well-known that this operation coincides with the Lusztig--Sch\"utzenberger involution in the context of crystals \cite{schutzenberger1972promotion, gansner1980equality, berenstein1996canonical}. Note that the contents of $T(P)$ and $C(P)$ are reverses of each other. In fact, they are related by the Sch\"utzenberger involution. This is made precise in Proposition \ref{prop_SI} below.

\end{example}
\begin{proposition}\label{prop_SI}
    Given a GT pattern $P$, the tableau and contretableau associated with it are swapped by the Sch\"utzenberger involution, that is $S(T(P)) = \mathrm{rect}(C(P))$.
\end{proposition}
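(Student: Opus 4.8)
The plan is to prove this by tracking reading words and using the characterization of the Sch\"utzenberger involution $S$ already recorded in Section~\ref{sec:Notation}: for a tableau $R$ of shape $\nu$, $S(R) = \mathrm{rect}(C)$ where $C$ is a contretableau of shape $\nu'$ whose reading word is $S(w(R))$, i.e.\ the reversal of $w(R)$ with each entry $t$ replaced by $k+1-t$ (here $k = p_{m,1}$ is the largest entry). So it suffices to show two things: first, that $C(P)$ is a contretableau whose reading word is Knuth-equivalent to the reading word of some (equivalently, the) contretableau representing $S(T(P))$; and second, that $\mathrm{rect}(C(P))$ has the correct shape $\nu$. Since $\mathrm{rect}$ depends only on the plactic class, once the reading words are shown Knuth-equivalent (indeed, I expect them to be literally equal up to the standard bijection), the statement follows.

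Concretely, first I would unwind the definition of $T(P)$: its reading word $w(T(P))$ is read off row by row from the sequence $(0)\subset P_1 \subset \cdots \subset P_m$, the entries equal to $i$ being exactly the boxes of $P_i/P_{i-1}$. Then I would unwind $C(P)$: it is built from the reversed-and-complemented chain $\mathbf{k}-\mathrm{rev}(P_m)\subset \cdots \subset \mathbf{k}-\mathrm{rev}(P_1)\subset \mathbf{k}$, so the boxes of $C(P)$ carrying entry $i$ are precisely those of $(\mathbf{k}-\mathrm{rev}(P_{m-i}))/(\mathbf{k}-\mathrm{rev}(P_{m-i+1}))$, which is the $180^\circ$ rotation of the horizontal strip $P_{m-i+1}/P_{m-i}$. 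The key combinatorial computation is then to check directly that the reverse row word of $C(P)$, read in the north-western convention, coincides with $S(w(T(P)))$: reversing the order of the partitions in the chain corresponds to reversing the word and passing from $P_j/P_{j-1}$ to its complementary rotated strip corresponds to sending $i \mapsto k+1-i$ and to reversing the left-to-right reading order within each strip. This is essentially a bookkeeping lemma about how complementation inside a rectangle $\mathbf{k}$ acts on row words of horizontal-strip chains.

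For the shape claim, I would invoke Proposition-level facts about rectification: $\mathrm{rect}(C(P))$ is by definition the unique SSYT whose reverse row word is Knuth-equivalent to $w(C(P)) = S(w(T(P)))$, and by the cited fact (Theorem~2.1 of \cite{fulton}) this tableau is unique; since $S(T(P))$ is by definition exactly that tableau and has shape $\nu$ (a well-known property of the Sch\"utzenberger involution recalled in Section~\ref{sec:Notation}), we get $S(T(P)) = \mathrm{rect}(C(P))$ with shape $\nu$ for free. So no independent shape argument is needed once the word identity is in place.

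The main obstacle I anticipate is purely notational: carefully matching the "reverse row word / north-western" reading conventions for a straight-shape tableau versus a contretableau, and checking that the complement-and-reverse operation on the GT chain really does implement $w \mapsto S(w)$ on the nose rather than merely up to plactic equivalence. I would handle this by doing the verification at the level of a single horizontal strip first (showing the rotated complementary strip's row contributes the reversed, complemented letters in the correct position of the word), then concatenating over $i = 1, \dots, m$ and observing the outer index reversal $P_j \leftrightarrow P_{m-j}$ matches the global reversal in $S(w(T(P)))$. Everything else — that $C(P)$ is genuinely a contretableau, that the strips behave well under complementation — is already asserted in the paragraph preceding the proposition, so it can be cited rather than reproved.
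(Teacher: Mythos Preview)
Your approach is essentially the paper's own: both establish the word identity $w(C(P)) = S(w(T(P)))$ directly (the paper phrases this via the NE diagonals of $P$, observing that the $i$-th diagonal simultaneously encodes row $i$ of $T(P)$ and the corresponding row of $C(P)$ with complemented letters; you phrase the same bookkeeping via horizontal strips and $180^\circ$ rotation of the complementary chain), after which the conclusion is immediate from the definition of $S$ in Section~\ref{sec:Notation}.

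One small correction: the alphabet size governing $S$ on $T(P)$ is $m$, the size of the GT pattern, not $k = p_{m,1}$ as you wrote; the latter is merely the width of the bounding rectangle used to build $C(P)$ and plays no role in the letter-complement. The correct map is $j \mapsto m+1-j$, and this is indeed what your rotation argument produces once carried out, so the slip does not affect the validity of the plan.
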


\begin{proof}
    Let $P$ be a GT pattern of size $m$. It follows from the definitions of $T(P)$ and $C(P)$ that the reverse reading word of $T(P)$, respectively the reverse reading word of $C(P)$, are obtained from the NE diagonals of $P$ in the following way. The $i$-th NE diagonal of $P$, $p_{m,i},..., p_{i,i}$ determines the entries in the i-th row of $T(P)$ read from right to left, respectively the entries in the $m-i$-th row of $C(P)$ read from left to right. This is done very simply: the number of $j's$ in row $i$ of $T(P)$, respectively the number of $m-j's$ in row $m-i$ of $C(P)$, is given by $p_{j,i} - p_{j-1,i}$, working with the convention that $p_{i,j} = 0$ whenever $j <i$. 
    
\end{proof}

\subsection{Hive polytopes}
Fix a positive integer $m$. A $m+1$-triangular grid as in Figure~\ref{4-hive-grid} will be called a $m$-hive triangle. 
\begin{figure}[h]
    \centering
    \begin{center}
    \begin{tikzpicture}
        \node at (0,0) {$\bullet$};
        \node at (1,0) {$\bullet$};
        \node at (2,0) {$\bullet$};
        \node at (3,0) {$\bullet$};
		\node at (4,0) {$\bullet$};
        
        \node at (0.5,0.5*1.732) {$\bullet$};
        \node at (1.5,0.5*1.732) {$\bullet$};
        \node at (2.5,0.5*1.732) {$\bullet$};
        \node at (3.5,0.5*1.732) {$\bullet$};

        \node at (1,1.732) {$\bullet$};
        \node at (2,1.732) {$\bullet$};
        \node at (3,1.732) {$\bullet$};

        \node at (1.5,1.5*1.732) {$\bullet$};
        \node at (2.5,1.5*1.732) {$\bullet$};
        
        \node at (2,2*1.732) {$\bullet$};

        \draw (0,0) -- (4,0);
		\draw (0.5, 0.5*1.732) -- (3.5, 0.5*1.732);
		\draw (1, 1.732) -- (3, 1.732);
		\draw (1.5, 1.5*1.732) -- (2.5, 1.5*1.732);

		\draw (0,0) -- (2, 2*1.732);
		\draw (1,0) -- (2.5, 1.5*1.732);
		\draw (2,0) -- (3, 1.732);
		\draw (3,0) -- (3.5, 0.5* 1.732);	

		\draw (1,0) -- (0.5, 0.5*1.732);
		\draw (2,0) -- (1,1.732);
		\draw (3,0) -- (1.5, 1.5*1.732);
        \draw (4,0) -- (2, 2*1.732);
	\end{tikzpicture}
 \end{center}
    \caption{The 4-hive triangle}
    \label{4-hive-grid}
\end{figure}
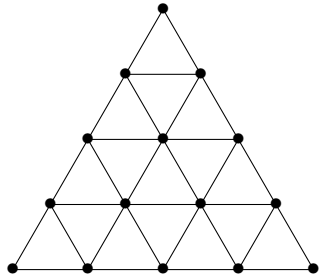
\medskip

Observe that the unit rhombi in a $m$-hive triangle are of three kinds based on their orientation:
$$\text{Northeast (NE):} \;\;\; \begin{tikzpicture}
  [scale=0.4]
  \node at (0,0) {$\bullet$};
  \node at (2,0) {$\bullet$};
  \node at (1,1.732) {$\bullet$};
  \node at (3,1*1.732) {$\bullet$};
  \draw (0,0) -- (2,0);
  \draw (0,0) -- (1, 1.732);
  \draw (1, 1.732) -- (3, 1.732);
  \draw (2,0) -- (3,1.732);
  
\end{tikzpicture}\; \quad \quad \;
 \text{Southeast (SE):} \;\;\;   \begin{tikzpicture}
  [scale=0.4]
  \node at (0,0) {$\bullet$};
  \node at (2,0) {$\bullet$};
  \node at (-1,1.732) {$\bullet$};
  \node at (1,1.732) {$\bullet$};
  \draw (0,0) -- (2,0);
  \draw (0,0) -- (-1, 1.732);
  \draw (-1, 1.732) -- (1, 1.732);
  \draw (2,0) -- (1,1.732);
  
\end{tikzpicture}\;    \quad \quad \;
\; \text{Vertical: } \;\;
\begin{tikzpicture}
 [scale=0.3]
  \node at (0,0) {$\bullet$};
  \node at (1.4,1.4*1.732) {$\bullet$};
  \node at (1.4,-1.4*1.732) {$\bullet$};
  \node at (2.8,0) {$\bullet$};
  \draw (0,0) -- (1.4,1.4*1.732);
  \draw (2.8,0) -- (1.4,1.4*1.732);
  \draw (1.4,-1.4*1.732) -- (0,0);
  \draw (1.4,-1.4*1.732) -- (2.8,0);
  
\end{tikzpicture}$$

A $m$-hive is a labelling of the $m+1$-hive triangle such that the content of each small rhombus is positive. Here, the \emph{content} of a small rhombus is the sum of the labels on its obtuse-angled nodes minus the sum of
the labels on its acute-angled nodes (in Figure~\ref{fig:one_flat_rhombi}, the content of the displayed NE rhombus would be $h_{i,j}h_{i+1,j+1}-h_{i+1,j}h_{i,j+1}$). 
\medskip

Given partitions $\lambda, \mu$ and $\nu$ with at most $m$ non-zero parts, the hive polytope $\mathrm{Hive(\lambda, \mu, \nu)}$ is the set of all $m$-hives with the boundaries labelled as in Figure~\ref{hive_boundary} (left).
\begin{figure}
\centering
\begin{center}\null
\begin{tikzpicture}
    \node at (0,0) {$\bullet$};
    \node at (1,0) {$\bullet$};
    \node at (2,0) {$\bullet$};
    \node at (3,0) {$\bullet$};
	\node at (4,0) {$\bullet$};
    
    \node at (-0.3,-0.5) {$|\lambda|$};
    \node at (1,-0.5) {$|\lambda|$};
    \node at (1,-1) {$+ \mu_1$};
    \node at (2,-0.5) {$|\lambda|$};
    \node at (2,-1) {$+ \mu_1 $};
    \node at (2,-1.5) {$+ \mu_2$};
    \node at (3,-0.5) {$\cdots$};
    \node at (5,-0.5) {$|\lambda| + |\mu| = |\nu|$};
        
    \node at (0.5,0.5*1.732) {$\bullet$};
    \node at (1.5,0.5*1.732) {$\bullet$};
    \node at (2.5,0.5*1.732) {$\bullet$};
    \node at (3.5,0.5*1.732) {$\bullet$};

    \node at (-0.1,0.58*1.732) {$.$};
    \node at (-0.17,0.5*1.732) {$.$};
    \node at (-0.03,0.66*1.732) {$.$};
    \node at (0.1,1.732+0.1) {$\lambda_1 + \lambda_2$};
    \node at (1,1.5*1.732+0.1) {$\lambda_1$};
    \node at (1.7,2*1.732+0.2) {$0$};

    \node at (3,1.5*1.732) {$\nu_1$};
    \node at (3.8,1.732+0.1) {$\nu_1 + \nu_2$};
    \node at (4.2, 0.5*1.732+0.2) {$.$};
    \node at (4.27, 0.42*1.732+0.2) {$.$};
    \node at (4.34, 0.34*1.732+0.2) {$.$};

    \node at (1,1.732) {$\bullet$};
    \node at (2,1.732) {$\bullet$};
    \node at (3,1.732) {$\bullet$};

    \node at (1.5,1.5*1.732) {$\bullet$};
    \node at (2.5,1.5*1.732) {$\bullet$};
        
    \node at (2,2*1.732) {$\bullet$};
\end{tikzpicture}
\begin{tikzpicture}
    \node at (-1.7,-1.7) {$\null$};
    \node at (0,0) {$\bullet$};
    \node at (1,0) {$\bullet$};
    \node at (2,0) {$\bullet$};
    \node at (3,0) {$\bullet$};
	\node at (4,0) {$\bullet$};
    \node at (0,-0.3) {$h_{5,1}$};
    \node at (1,-0.3) {$h_{5,2}$};
    \node at (2,-0.3) {$h_{5,3}$};
    \node at (3,-0.3) {$h_{5,4}$};
	\node at (4,-0.3) {$h_{5,5}$};
  
    \node at (0.5,0.5*1.732) {$\bullet$};
    \node at (1.5,0.5*1.732) {$\bullet$};
    \node at (2.5,0.5*1.732) {$\bullet$};
    \node at (3.5,0.5*1.732) {$\bullet$};
        
    \node at (0.5,0.3*1.732) {$h_{4,1}$};
    \node at (1.5,0.3*1.732) {$h_{4,2}$};
    \node at (2.5,0.3*1.732) {$h_{4,3}$};
    \node at (3.5,0.3*1.732) {$h_{4,4}$};

    \node at (1,0.8*1.732) {$h_{3,1}$};
    \node at (2,0.8*1.732) {$h_{3,2}$};
    \node at (3,0.8*1.732) {$h_{3,3}$};
    
    \node at (1.5,1.3*1.732) {$h_{2,1}$};
    \node at (2.5, 1.3*1.732) {$h_{2,2}$};

    \node at (2,1.8*1.732) {$h_{1,1}$};
        
    \node at (1,1.732) {$\bullet$};
    \node at (2,1.732) {$\bullet$};
    \node at (3,1.732) {$\bullet$};

    \node at (1.5,1.5*1.732) {$\bullet$};
    \node at (2.5,1.5*1.732) {$\bullet$};
        
    \node at (2,2*1.732) {$\bullet$};
\end{tikzpicture}
\end{center}
    \caption{}
    \label{hive_boundary}
\end{figure}
\subsubsection{Hives and LR coefficients}
\begin{theorem}\cite{Buch,Knutson-Tao}
    The LR coefficient $c_{\lambda, \mu} ^{\nu}$ is given by the number of integral points in the hive polytope $\mathrm{Hive}(\lambda, \mu, \nu)$.
\end{theorem}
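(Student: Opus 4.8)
The plan is to deduce the theorem from the Littlewood--Richardson rule recalled in \S\ref{sec:branching_models} by producing an explicit bijection between the integral points of $\mathrm{Hive}(\lambda,\mu,\nu)$ and the set $\operatorname{LR}(\nu/\lambda,\mu)$ of Littlewood--Richardson skew tableaux of shape $\nu/\lambda$ and content $\mu$; since $|\operatorname{LR}(\nu/\lambda,\mu)| = c^{\nu}_{\lambda,\mu}$ by definition, this suffices. The bijection is the one of Buch \cite{Buch}, a repackaging of the Knutson--Tao correspondence \cite{Knutson-Tao}. Given an integral hive $h=(h_{k,j})_{1\le j\le k\le m+1}$ with the boundary of Figure~\ref{hive_boundary}, set $\delta^{(k)}:=(h_{k,2}-h_{k,1},\,\ldots,\,h_{k,k}-h_{k,k-1})$, the vector of consecutive differences along the $k$-th horizontal row. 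Using the rhombus inequalities one first checks that every horizontal row of $h$ is concave, so that each $\delta^{(k)}$ is a partition; the boundary data give $\delta^{(1)}=\varnothing$, $\delta^{(m+1)}=\mu$, and $|\delta^{(k)}|=\sum_{i<k}(\nu_i-\lambda_i)$. One then defines $\Phi(h)$ to be the filling of $\nu/\lambda$ whose $k$-th row contains the value $j$ with multiplicity $\delta^{(k+1)}_j-\delta^{(k)}_j$: these multiplicities are non-negative once the nesting $\delta^{(k)}\subseteq\delta^{(k+1)}$ is in hand, they sum over $j$ to $\nu_k-\lambda_k$, and they sum over $k$ to $\mu_j$, so $\Phi(h)$ is at least a filling of $\nu/\lambda$ of content $\mu$.

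The content of the theorem is then the assertion that $h$ satisfies the hive inequalities if and only if $\Phi(h)\in\operatorname{LR}(\nu/\lambda,\mu)$, and I would prove this by matching the three orientations of unit rhombi against the three defining properties on the tableau side. The vertical rhombi (which straddle three consecutive rows of $h$) encode exactly the column-strictness of $\Phi(h)$; one slanted family encodes the non-negativity of the multiplicities $\delta^{(k+1)}_j-\delta^{(k)}_j$ (equivalently, that $\delta^{(k)}_j$ is weakly increasing in $k$ for each $j$), which together with column-strictness is semistandardness; and the other slanted family encodes that the reverse reading word $w(\Phi(h))$ is a Yamanouchi word. For the last point one notes that, reading $w(\Phi(h))$ from the top, the prefix that ends after the block of $j$'s in row $k$ has content $\delta^{(k)}$ modified in its first coordinates by the leading multiplicities of row $k$; demanding that every such content be a partition is, coordinate by coordinate, precisely the inequalities carried by that family of rhombi. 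The inverse of $\Phi$ is transparent: from $T\in\operatorname{LR}(\nu/\lambda,\mu)$ one reads off $\delta^{(k)}$ as the content of the top $k-1$ rows of $T$ and reconstitutes $h$ by partial summation along rows; both directions preserve integrality, so $\Phi$ is a bijection of finite sets and the theorem follows.

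The main obstacle is the ``if and only if'' just outlined --- in particular, showing that the Yamanouchi condition is \emph{equivalent} to (not merely forced by) the relevant family of rhombus inequalities, and that column-strictness is captured \emph{exactly}, with no inequality left over, by the vertical family. This book-keeping is the combinatorial heart of the Knutson--Tao theorem; it is cleanest to organize it as an induction on $k$, i.e.\ on the hive read downward from its apex, comparing at each step the rhombi living between rows $k-1$ and $k+1$ with the partition condition on the corresponding prefixes of $w(\Phi(h))$ and with the column-strict comparison of rows $k-1$ and $k$ of $\Phi(h)$. I also note that for the purposes of this paper only the cases with $\lambda$ (or $\mu$) even are needed, and that an alternative route is to pass through Gelfand--Tsetlin-type patterns as in \cite{KRV,kundu2024saturation}: the hive can be sliced so that two of the rhombus families become the interlacing inequalities of a chain of such patterns --- hence, through the dictionary of \S\ref{sec:GT patterns}, the semistandardness of $\Phi(h)$ --- while the third family becomes verbatim the Yamanouchi condition.
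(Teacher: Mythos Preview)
The paper does not prove this theorem: it is quoted from \cite{Buch,Knutson-Tao} and used as a black box. Immediately after the statement the paper only records, ``for the comfort of the reader'', the explicit map $\varphi:\operatorname{LR}^{\nu}_{\lambda,\mu}\to\mathrm{Hive}(\lambda,\mu,\nu)$ (pass to the GT pattern, take partial sums along rows, shift the $j$-th NE diagonal by $\lambda^p_j$), without verifying that the image lands in the hive polytope or that $\varphi$ is onto. Your map $\Phi$ is, up to the companion bijection $c$ of \S\ref{sec:branching_models}, exactly the inverse of the paper's $\varphi$: your $\delta^{(k)}$ is the $(k{-}1)$-st row of the GT pattern $P(h)$ the paper extracts from a hive in \S4.3, and the tableau $T(P(h))$ built there is the companion $c(\Phi(h))$ of yours.

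So your proposal is not an alternative to the paper's argument but rather the argument the paper deliberately omits by citation. Your plan---match each rhombus orientation to one of the three tableau conditions---is the standard one (spelled out in Fulton's appendix to \cite{Buch}), and your attributions are correct in the paper's conventions: NE gives non-negativity of the multiplicities $\delta^{(k+1)}_j-\delta^{(k)}_j$, the vertical family gives column-strictness of $\Phi(h)$, and SE gives the Yamanouchi condition (the prefix contents being exactly the interpolations between $\delta^{(k)}$ and $\delta^{(k+1)}$). It is worth noting, to avoid confusion when comparing with the paper, that under the companion map these roles get permuted: for $T(P(h))\in\operatorname{LR}^{\nu}_{\lambda,\mu}$ it is NE${}+{}$SE that give semistandardness (the GT inequalities \eqref{eq_GT_inequality}) while the vertical family encodes $\lambda$-dominance, which is how the paper phrases things in Proposition~\ref{prop_LR_symmetry}.
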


We present here the bijective map $\varphi: \operatorname{LR}^{\nu}_{\lambda,\mu} \longrightarrow  \mathrm{Hive(\lambda, \mu, \nu)}$ for the comfort of the reader. Let $R \in \operatorname{LR}^{\nu}_{\lambda,\mu}$. We first compute the GT pattern $GT(R)$ defined by $R$. Next, we obtain a new $m+1$ triangular arrangement of numbers $$GT(R)^{p} := (a_{i,j})_{\begin{smallmatrix}0\leq i \leq m,\\ 0 \leq j \leq i\end{smallmatrix}}$$ defined by taking partial sums along the rows of $GT(R)$ i.e., define $a_{0,0} := 0$ and for $1 \leq i \leq m$, define $a_{i,j} : = \sum_{k=1}^{j} p_{i,k}$, where $ p_{i,j} $ are the entries of the GT pattern $GT(R)$. Now let $\lambda^{p} = (\lambda^{p}_{0}, \lambda^{p}_{1},..., \lambda^{p}_{m})$ denote the $m+1$ vector consisting of partial sums for $\lambda$, i.e., $\lambda^{p}_{j} = \sum_{i = 1}^{j} \lambda^{p}_{i}$. The hive $\varphi(R)$ is obtained from $GT(R)^{p}$ by adding  $\lambda^{p}_{j}$ to each entry of $GT(R)^{p}$ that has the form $a_{i,j}$. That is,
$$\varphi(R) := (a_{i,j} + \lambda_j ^p)_{\begin{smallmatrix}0\leq i \leq m,\\ 0 \leq j \leq i\end{smallmatrix}} $$
\begin{example}
    \label{ex:lr-comp-gt-h}
Let $m = 6, \nu = (5,4,3,3,0,0), \lambda = (2,1,1,0,0,0)$ and $\mu = (4,3,2,2,0,0)$, and let 
\[ T = \Skew(0: 0, 0,1,1,1|0: 0,2,2,2 |0: 0, 3,3 |0: 1,4,4) \in \operatorname{LR}(\nu/ \lambda, \mu), \hbox{ whose companion tableau is }
 c(T) = \Skew(0: 1,1,1,4 |0: 2,2,2|0: 3,3 |0: 4,4).\]

Then we have $GT(T)$ and $GT(T)^p$ are given by 

\begin{figure}[h]
\centering
\begin{tikzpicture}
    \node at (-1,0.5*1.732) {$\null$};

    \node at (-0.5,-0.5*1.732) {$4$};
    \node at (0.5,-0.5*1.732) {$3$};
    \node at (1.5,-0.5*1.732) {$2$};
    \node at (2.5,-0.5*1.732) {$2$};
    \node at (3.5,-0.5*1.732) {$0$};
    \node at (4.5,-0.5*1.732) {$0$};

    \node at (0,0) {$4$};
    \node at (1,0) {$3$};
    \node at (2,0) {$2$};
    \node at (3,0) {$2$};
    \node at (4,0) {$0$};
    
    \node at (0.5,0.5*1.732) {$4$};
    \node at (1.5,0.5*1.732) {$3$};
    \node at (2.5,0.5*1.732) {$2$};
    \node at (3.5,0.5*1.732) {$2$};

    \node at (1,1.732) {$3$};
    \node at (2,1.732) {$3$};
    \node at (3,1.732) {$2$ };

    \node at (1.5,1.5*1.732) {$3$};
    \node at (2.5,1.5*1.732) {$3$};
        
    \node at (2,2*1.732) {$3$};

\end{tikzpicture}~\begin{tikzpicture}
    \node at (-1,0.5*1.732) {$\null$};
    \node at (0.5,-0.5*1.732) {$0$};
    \node at (1.5,-0.5*1.732) {$4$};
    \node at (2.5,-0.5*1.732) {$7$};
    \node at (3.5,-0.5*1.732) {$9$};
    \node at (4.5,-0.5*1.732) {$11$};
    \node at (5.5,-0.5*1.732) {$11$};
    \node at (6.5,-0.5*1.732) {$11$};

    \node at (1,0) {$0$};
    \node at (2,0) {$4$};
    \node at (3,0) {$7$};
    \node at (4,0) {$9$};
    \node at (5,0) {$11$};
    \node at (6,0) {$11$};
    
    \node at (1.5,0.5*1.732) {$0$};
    \node at (2.5,0.5*1.732) {$4$};
    \node at (3.5,0.5*1.732) {$7$};
    \node at (4.5,0.5*1.732) {$9$};
    \node at (5.5,0.5*1.732) {$11$.};
    
    \node at (2,1.732) {$0$};
    \node at (3,1.732) {$3$};
    \node at (4,1.732) {$6$};
    \node at (5,1.732) {$8$};

    \node at (2.5,1.5*1.732) {$0$};
    \node at (3.5,1.5*1.732) {$3$};
    \node at (4.5,1.5*1.732) {$6$ };

    \node at (3,2*1.732) {$0$};
    \node at (4,2*1.732) {$3$};
        
    \node at (3.5,2.5*1.732) {$0$};
\end{tikzpicture}
\end{figure}

\noindent
The partial sums vector for $\lambda$ is $(0,2,3,4,4,4,4)$. We therefore proceed to add the entries of the partial sums vector to  $GT(T)^p$ to get the hive $\varphi(c(T))$, that is, we add $0$ to the first row, $2$ to the second row, $3$ to the third row, and $4$ to the 5-th, 6-th and 7-th rows: 

\begin{figure}[h]
    \centering
    \begin{tikzpicture}
    \node at (5,-1.732) {$4$};
    \node at (6,-1.732) {$8$};
    \node at (7,-1.732) {$11$};
    \node at (8,-1.732) {$13$};
    \node at (9,-1.732) {$15$};
    \node at (10,-1.732) {$15$};
    \node at (11,-1.732) {$15$};

    \node at (5.5,-0.5*1.732) {$4$};
    \node at (6.5,-0.5*1.732) {$8$};
    \node at (7.5,-0.5*1.732) {$11$};
    \node at (8.5,-0.5*1.732) {$13$};
    \node at (9.5,-0.5*1.732) {$15$};
    \node at (10.5, -0.5*1.732) {$15$};

    \node at (6,0) {$4$};
    \node at (7,0) {$8$};
    \node at (8,0) {$11$};
    \node at (9,0) {$13$};
    \node at (10,0) {$15$};
    
    \node at (6.5,0.5*1.732) {$4$};
    \node at (7.5,0.5*1.732) {$7$};
    \node at (8.5,0.5*1.732) {$10$};
    \node at (9.5,0.5*1.732) {$12$};

    \node at (7,1.732) {$3$};
    \node at (8,1.732) {$6$};
    \node at (9,1.732) {$9$ };

    \node at (7.5,1.5*1.732) {$2$};
    \node at (8.5,1.5*1.732) {$5$};

    \node at (8,2*1.732) {$0$};
\end{tikzpicture}
\end{figure}
\end{example}
\subsubsection{Flagged hives}
A flag $\phi = (\phi_1, \ldots, \phi_m)$ is a weakly increasing $m$-tuple of positive integers such that $i \leq \phi_i \leq m$ for all $1 \leq i \leq m$. The \emph{flagged hive polytope}, denoted by  $\mathrm{Hive}(\lambda, \mu, \nu, \phi)$, corresponding to a flag $\phi$ is defined to be the set of all hives in $\mathrm{Hive}(\lambda, \mu, \nu)$ for which given any $k$, the contents of the first $m-\phi_k$ northeast rhombi in the $k^{th}$ (slanted) column are $0$.
\medskip

See Figure~\ref{flagged_hive} for an illustration of the set of all northeast rhombi (shaded in the figure) determined by the flag $(2,3,3,4)$ whose contents are all required to be $0$. 

\begin{figure}[h]
    \centering
    \begin{center}
    \begin{tikzpicture}
        \node at (0,0) {$\bullet$};
        \node at (1,0) {$\bullet$};
        \node at (2,0) {$\bullet$};
        \node at (3,0) {$\bullet$};
		\node at (4,0) {$\bullet$};
        
        \node at (0.5,0.5*1.732) {$\bullet$};
        \node at (1.5,0.5*1.732) {$\bullet$};
        \node at (2.5,0.5*1.732) {$\bullet$};
        \node at (3.5,0.5*1.732) {$\bullet$};

        \node at (1,1.732) {$\bullet$};
        \node at (2,1.732) {$\bullet$};
        \node at (3,1.732) {$\bullet$};

        \node at (1.5,1.5*1.732) {$\bullet$};
        \node at (2.5,1.5*1.732) {$\bullet$};
        
        \node at (2,2*1.732) {$\bullet$};

        \draw (0,0) -- (4,0);
		\draw (0.5, 0.5*1.732) -- (3.5, 0.5*1.732);
		\draw (1, 1.732) -- (3, 1.732);
		\draw (1.5, 1.5*1.732) -- (2.5, 1.5*1.732);

		\draw (0,0) -- (2, 2*1.732);
		\draw (1,0) -- (2.5, 1.5*1.732);
		\draw (2,0) -- (3, 1.732);
		\draw (3,0) -- (3.5, 0.5* 1.732);	

		\draw (1,0) -- (0.5, 0.5*1.732);
		\draw (2,0) -- (1,1.732);
		\draw (3,0) -- (1.5, 1.5*1.732);
        \draw (4,0) -- (2, 2*1.732);

        \draw [ultra thick, draw=black, fill=violet, fill opacity=0.5]
       (0,0) -- (1,1.732) -- (2,1.732) -- (1.5,0.5*1.732) -- (3.5,0.5*1.732) --  
       (3,0) -- cycle;

       \draw [ultra thick, draw=black]
       (0.5,0.5*1.732) -- (1.5,0.5*1.732) -- (1,0);
        \draw [ultra thick, draw=black]
       (2.5,0.5*1.732) -- (2,0);
        
	\end{tikzpicture}
 \end{center}
    \caption{The region corresponding to the flag $\phi = (2,3,3,4)$ for a $4-$hive.}

    \label{flagged_hive}
\end{figure}
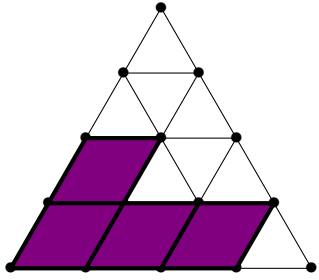

Given a partition $\lambda$ and a flag $\phi$, we define the set of flagged tableaux $\mathrm{SSYT}(\lambda, \phi)$ to be the set of all semi-standard Young tableaux of shape $\lambda$ such that each entry in the $k^{th}$ row is at most $\phi_k$, for all $k$. 
\medskip

\begin{proposition}\label{prop_fl_hive_to_tab}\cite{KRV} 
    For partitions $\lambda, \mu, \nu$ with at most $m$ parts, the set of $\mu$-dominant flagged tableaux of shape $\lambda$ with flag $\phi$ and weight $\nu - \mu$ (i.e., $\mathrm{LR}^\nu _{\mu,\lambda} \cap \mathrm{SSYT}(\lambda,\phi)$) is enumerated by the number of integral points in $\mathrm{Hive}(\mu, \lambda, \nu, \phi)$.
\end{proposition}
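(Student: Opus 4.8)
The plan is to upgrade the existing bijection $\varphi \colon \operatorname{LR}^\nu_{\lambda,\mu} \to \operatorname{Hive}(\lambda,\mu,\nu)$ (which, interchanging $\lambda$ and $\mu$, gives $\varphi \colon \operatorname{LR}^\nu_{\mu,\lambda} \to \operatorname{Hive}(\mu,\lambda,\nu)$) to a bijection that respects the flag. Concretely, I would show that for $R \in \operatorname{LR}^\nu_{\mu,\lambda}$, the condition ``$R \in \operatorname{SSYT}(\lambda,\phi)$'' on the tableau side corresponds \emph{exactly} to the vanishing condition ``the contents of the first $m-\phi_k$ northeast rhombi in the $k$-th slanted column of $\varphi(R)$ are $0$'' on the hive side. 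Granting this, $\varphi$ restricts to a bijection between $\operatorname{LR}^\nu_{\mu,\lambda} \cap \operatorname{SSYT}(\lambda,\phi)$ and the integral points of $\operatorname{Hive}(\mu,\lambda,\nu,\phi)$, and the theorem that $c^\nu_{\lambda,\mu}$ counts integral points of $\operatorname{Hive}(\lambda,\mu,\nu)$ (combined with the $\operatorname{LR}^\nu_{\mu,\lambda} \cong \operatorname{LR}(\nu/\lambda,\mu)$ companion bijection and symmetry) handles the enumeration.

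The key computation is to unwind what a northeast rhombus content means after the two transformations $GT(R) \rightsquigarrow GT(R)^p \rightsquigarrow \varphi(R)$. First I would note that adding the constant $\lambda^p_j$ along the $j$-th slanted coordinate is an affine shift that does \emph{not} change the content of any northeast rhombus, since a northeast rhombus has its two obtuse nodes and two acute nodes distributed so that the column-index shifts cancel. (This is exactly the reason northeast-rhombus contents are the ``natural'' data to flag.) So it suffices to compute northeast-rhombus contents of $GT(R)^p$ directly in terms of the GT entries $p_{i,j}$. Taking partial sums $a_{i,j} = \sum_{k \le j} p_{i,k}$ along rows, a short calculation shows the content of the northeast rhombus sitting in slanted column $j$ between rows $i$ and $i+1$ equals $a_{i,j} - a_{i,j-1} - a_{i+1,j} + a_{i+1,j-1} = p_{i,j} - p_{i+1,j}$ (with the convention $p_{i,j}=0$ for $j>i$). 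Recalling from $\S\ref{sec:GT patterns}$ that $p_{i,j} - p_{i-1,j}$ is the number of $i$'s in row $j$ of the tableau $T(P)$, the quantity $p_{i,j} - p_{i+1,j}$ is precisely related to whether the entry $i{+}1$ fails to occur in row $j$. Chasing the indexing, the vanishing of the first $m - \phi_k$ northeast rhombi in slanted column $k$ is equivalent to: no entry exceeding $\phi_k$ appears in row $k$ of $R$, i.e.\ $R \in \operatorname{SSYT}(\lambda,\phi)$. I would make this precise by a clean induction/telescoping argument rather than a formula-heavy slog, being careful with the off-by-one between ``rhombus labels'' and ``rows of the tableau'' and with the orientation convention for NE rhombi fixed in Figure~\ref{flagged_hive}.

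The main obstacle I anticipate is purely bookkeeping: getting the indexing conventions to line up — which slanted column of the hive corresponds to which column of the GT pattern, whether ``the first $m-\phi_k$ NE rhombi'' are counted from the top or the bottom of the column, and how the $m$-vs-$(m+1)$ size shift between GT patterns and hive triangles interacts with all of this. There is genuine risk of an off-by-one error that silently swaps ``row $k$, entries $\le \phi_k$'' for ``row $k$, entries $< \phi_k$'' or shifts everything by one slanted column, so I would pin everything down on the worked Example~\ref{ex:lr-comp-gt-h} (checking that the flag read off the hive there matches the row-bounds of the tableau $c(T)$) before writing the general argument. A secondary, smaller point is to confirm that $\varphi$ being a bijection onto \emph{all} of $\operatorname{Hive}(\mu,\lambda,\nu)$ (a stated result of \cite{KRV}) combined with the rhombus identification really does give a bijection onto the integral points of the flagged sub-polytope, which is immediate once the content identity is established since the flag conditions cut out exactly the image of the flagged tableaux.
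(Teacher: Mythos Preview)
The paper does not give its own proof of this proposition; it is stated with a citation to \cite{KRV} and used as a black box. So there is nothing to compare your proposal against in the paper itself.

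That said, your plan is the right one and matches how this result is actually established. The two key points --- that the affine shift by $\mu^p_j$ along NE diagonals leaves every NE-rhombus content unchanged, and that after this cancellation the content reduces to a single GT difference --- are exactly correct, and indeed the same mechanism is used by the paper in the proof of Proposition~\ref{S} (where the NE-rhombus content is computed as a difference of $\hat p$-entries). Once that identification is made, the chain of vanishings down the $k$-th slanted column forces $p_{i,k}$ to be constant for $i \ge \phi_k$, which in tableau language says no entry exceeding $\phi_k$ occurs in row $k$; this is precisely the flag condition.

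One concrete bookkeeping slip to fix when you write it up: with the obtuse vertices of an NE rhombus at the top-left and bottom-right (as in Figure~\ref{fig:one_flat_rhombi}), the content of the rhombus spanning GT rows $i$ and $i{+}1$ in column $j$ is $a_{i,j-1}+a_{i+1,j}-a_{i,j}-a_{i+1,j-1}=p_{i+1,j}-p_{i,j}$, the negative of what you wrote. This is harmless for the argument (vanishing is vanishing) and is exactly the sort of sign issue you flagged; checking against Example~\ref{ex:lr-comp-gt-h} as you proposed will sort it out.
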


\subsection{Hives and GT patterns}
Given a hive $h = (h_{i,j})$, one can obtain a GT pattern $P(h):= p = (p_{i,j})$ by taking the successive differences along the rows (i.e., $p_{i,j} := h_{i+1,j+1} - h_{i+1,j}$). The assumption that the contents of the northeast and southeast rhombi of $h$ are non-negative translates to the GT inequalities in \eqref{eq_GT_inequality}. 
\medskip

One could also take the northeast differences and get a GT pattern $\hat{P}(h)$ out of it in a similar fashion (i.e., $\hat{p}_{i,j} := h_{m-i+j,m-i} - h_{m-i+j-1,m-i}$). In this case, the contents of the northeast and vertical rhombi of $h$ being non-negative translates to the GT inequalities in \eqref{eq_GT_inequality}

\begin{proposition}\label{prop_LR_symmetry}\cite[Proposition 4]{KRV}, \cite[Appendix A]{Buch}

    Let $h \in \mathrm{Hive}(\lambda, \mu, \nu)$ be an integral hive. Then,
    \begin{enumerate}
        \item $T(P(h))$ is a $\lambda$-dominant tableau of shape $\mu$ and weight $\nu - \lambda$. 
        \item  $C(\hat{P}(h))$ is a $\mu$-dominant contretableau of shape $\lambda$ and weight $\nu - \mu$.
    \end{enumerate}
    In fact, the map $T\circ P$ (resp. $C\circ \hat{P}$) is a bijection from $\mathrm{Hive}(\lambda, \mu, \nu)$ onto $LR_{\lambda,\mu} ^{\nu}$ (resp. the set of $\mu$-dominant contretableaux of shape $\lambda$ and weight $\nu-\lambda$).
\end{proposition}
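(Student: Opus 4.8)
\emph{Proof plan.} The plan is to prove part~(1) in full and to deduce part~(2) by the same argument applied ``one step rotated''. The engine for~(1) is a direct dictionary between hives in $\mathrm{Hive}(\lambda,\mu,\nu)$ and Littlewood--Richardson skew tableaux of shape $\nu/\lambda$ and content $\mu$, after which $T(P(h))$ is recognized as the companion of the tableau attached to $h$. First I would introduce coordinates adapted to the slanted columns of the hive triangle: for $0\le t\le m$ let $\kappa^{t}$ be the integer vector whose partial sums are the entries of the $(t+1)$-st slanted column of $h$, so that $h_{i,t+1}=\kappa^{t}_{1}+\cdots+\kappa^{t}_{i-1}$, the finitely many missing top entries being forced, via the right-hand boundary labels $\nu^{p}$ of Figure~\ref{hive_boundary}, to agree with $\nu$. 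The boundary conditions of a hive then give $\kappa^{0}=\lambda$, $\kappa^{m}=\nu$, and $|\kappa^{t}|-|\kappa^{t-1}|=\mu_{t}$; write $Y$ for the skew filling of $\nu/\lambda$ with content $\mu$ determined by the chain $\kappa^{0},\dots,\kappa^{m}$, and set $m_{v}(1,\dots,b):=\sum_{u\le b}(\kappa^{v}_{u}-\kappa^{v-1}_{u})$ for the number of $v$'s among the first $b$ rows of $Y$.

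The heart of the argument is a telescoping computation expressing the content of each unit rhombus of $h$ in these coordinates: the content of a northeast rhombus has the form $\kappa^{t}_{a}-\kappa^{t-1}_{a}$, that of a vertical rhombus the form $\kappa^{t-1}_{a}-\kappa^{t}_{a+1}$, and that of a southeast rhombus the form $m_{t-1}(1,\dots,a-1)-m_{t}(1,\dots,a)$. Granting this, non-negativity of the northeast and vertical rhombi says exactly that the $\kappa^{t}$ form a chain of partitions $\lambda=\kappa^{0}\subset\cdots\subset\kappa^{m}=\nu$ with horizontal-strip successive quotients, i.e.\ that $Y$ is a genuine semistandard skew tableau of shape $\nu/\lambda$ and content $\mu$; non-negativity of the northeast and southeast rhombi is the pair of inequalities \eqref{eq_GT_inequality} for $P(h)$ recalled above, so that $P(h)$ is a bona fide GT pattern and $T(P(h))$ is a well-defined semistandard tableau; and non-negativity of the southeast rhombi is precisely the lattice (Yamanouchi) condition on the reverse row word of $Y$. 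Hence $Y\in\operatorname{LR}(\nu/\lambda,\mu)$.

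It remains to identify $T(P(h))$. A direct check gives $P(h)=GT(c(Y))$: both patterns record in their $v$-th row, for each value $k$, how many $k$'s occur among the first $v$ rows of $Y$ (on the hive side this is $(\kappa^{k}_{1}+\cdots+\kappa^{k}_{v})-(\kappa^{k-1}_{1}+\cdots+\kappa^{k-1}_{v})$, which is the row-difference entry $h_{v+1,k+1}-h_{v+1,k}$; on the tableau side it is the number of entries $\le v$ in the $k$-th row of the companion $c(Y)$). Therefore $T(P(h))=c(Y)$, which by the companion correspondence of Section~\ref{sec:branching_models} is exactly a $\lambda$-dominant semistandard Young tableau of shape $\mu$ and weight $\nu-\lambda$, i.e.\ an element of $\operatorname{LR}^{\nu}_{\lambda,\mu}$. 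Bijectivity of $T\circ P$ is then immediate: the assignment $h\mapsto Y$ is invertible by construction --- from $Y$ read off the $\kappa^{t}$, take partial sums along rows to refill the slanted columns, and note that the displayed identities make the boundary come out correctly --- and the companion map is a bijection; the resulting inverse of $T\circ P$ is precisely the map $\varphi$ recalled above, and one verifies $P\circ\varphi=GT$ en route.

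For part~(2) I would run the identical argument with the horizontal slices of $h$ replaced by its northeast slices, i.e.\ with $P$ replaced by $\hat{P}$; the roles of the three rhombus families adjust accordingly (the northeast and vertical rhombi now furnish the GT inequalities for $\hat{P}(h)$, while the southeast rhombi still encode the lattice condition), and the object produced must be read as a contretableau, the passage from $T(\cdot)$ to $C(\cdot)$ being exactly the reverse--complement operation of Proposition~\ref{prop_SI}. This yields that $C(\hat{P}(h))$ is a $\mu$-dominant contretableau of shape $\lambda$ and that $C\circ\hat{P}$ is a bijection onto the set of all such. The step I expect to be the main obstacle is the rhombus-content computation in the $\kappa^{t}$-coordinates and, above all, the verification that the southeast-rhombus inequalities are \emph{exactly} --- not merely a consequence of --- the Yamanouchi condition: this is the only point where all three families of rhombi are needed at once, and where one must keep the apex-versus-base and slant-direction conventions perfectly aligned.
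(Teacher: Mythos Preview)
The paper does not give its own proof of this proposition: it is stated with the citations \cite[Proposition~4]{KRV} and \cite[Appendix~A]{Buch} and used as a black box. So there is no ``paper's proof'' to compare against; what you have written is essentially the argument one finds in those references, particularly the hive--LR tableau dictionary in Buch's appendix.

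Your plan is sound. The coordinate choice $h_{i,t+1}=\sum_{u\le i-1}\kappa^{t}_{u}$ together with the convention $\kappa^{t}_{s}=\nu_{s}$ for $s\le t$ is exactly the right bookkeeping, and your three rhombus--content identities check out: NE gives $\kappa^{t}_{a}-\kappa^{t-1}_{a}$, vertical gives $\kappa^{t-1}_{a}-\kappa^{t}_{a+1}$, and SE gives $m_{t-1}(1,\dots,a-1)-m_{t}(1,\dots,a)$. The identification $p_{i,j}=m_{j}(1,\dots,i)$ then makes the two roles you assign to the SE family (one half of the GT inequalities for $P(h)$, and the Yamanouchi condition for $Y$) visibly the same inequality, so there is no tension there. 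The equivalence between the SE inequalities and the full Yamanouchi condition on the reverse row word of $Y$ uses, as you note, that within a row of a semistandard filling the letters are read in weakly decreasing order, so the critical prefix for the pair $(t-1,t)$ ends immediately after the block of $t$'s in that row; this is worth stating explicitly when you write it up.

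For part~(2), your ``rotate and repeat'' is correct in spirit, but be precise about which rhombus family carries the dominance condition after rotation: with $\hat P$ the NE and vertical families give the GT inequalities (as the paper records just above the proposition), and it is the SE family that yields $\mu$-dominance of the contretableau, via the analogue of your $m$-count argument read along NE diagonals rather than rows. Finally, note a small typo in the proposition itself (not in your proof): the weight of the contretableaux in the target of $C\circ\hat P$ should be $\nu-\mu$, matching item~(2), not $\nu-\lambda$.
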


\subsection{Proof of the main theorem}
Fix $m= 2n$ in what follows and let $\lambda, \mu$ and $\nu$ be partitions such that length of $\nu$ is at most $2n-1$, length of $\mu$ is at most $n$ and $\lambda$ is even. 
\begin{proposition}\label{prop_LRS_to_fl_tabs}
    The image of $LRS(\nu/\mu, \lambda)$ under the companion map $c$ is the set of $\mu$-dominant tableaux in $\mathrm{SSYT}(\lambda, \phi)$ of weight $\nu - \mu$ where $\phi = (\phi_1, \ldots, \phi_{2n})$ is the flag for which $\phi_k = n+ \lfloor k/2 \rfloor$.
\end{proposition}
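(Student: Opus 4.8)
The plan is to leverage the fact that the companion map $c$ is \emph{already} a bijection $\operatorname{LR}(\nu/\mu,\lambda)\to\operatorname{LR}^{\nu}_{\mu,\lambda}$, so that it is enough to identify which $T$ are sent into $\operatorname{SSYT}(\lambda,\phi)$ and to check that this subset is exactly $\operatorname{LRS}(\nu/\mu,\lambda)$. The first (routine) step is to unpack the flag condition through $c$: since the $j$-th row of $c(T)$ records precisely the row indices of the cells of $T$ filled with $j$, one has $c(T)\in\operatorname{SSYT}(\lambda,\phi)$ if and only if, for every $j$, every cell of $T$ carrying the entry $j$ lies in a row $\leq \phi_j = n+\lfloor j/2\rfloor$. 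For odd $j=2i+1$ this is verbatim the Sundaram property (and when $\lambda_j=0$, i.e. $j>\ell(\lambda)$, both conditions are vacuous), so the entire content of the proposition is the implication: if $T$ satisfies the Sundaram property (which only constrains the \emph{odd} entries), then the \emph{even} entries $2i$ of $T$ also lie in rows $\leq n+i$. The reverse implication is then immediate, since the bound on the odd-indexed rows of $c(T)\in\operatorname{SSYT}(\lambda,\phi)$ is literally the Sundaram condition.

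I would prove the even-entry bound by induction on $i$. Suppose for contradiction that some cell $(r,c)$ of $T$ carries the entry $2i$ with $r\geq n+i+1$. The cell $(r-1,c)$ cannot lie in $\mu$: otherwise $\mu_{r-1}\geq c\geq 1$, so $\ell(\mu)\geq r-1\geq n+i>n$, contradicting $\ell(\mu)\leq n$. Hence $(r-1,c)$ is a genuine cell of the skew shape carrying some entry $v$ with $1\leq v\leq 2i-1$ (strict decrease up a column). If $v$ is odd, then $v\leq\ell(\lambda)$ (the letter $v$ occurs in $T$) and the Sundaram property places every $v$ in a row $\leq n+(v-1)/2\leq n+i-1$; if $v$ is even, then $2\leq v\leq 2i-2$ and the inductive hypothesis places every such $v$ in a row $\leq n+v/2\leq n+i-1$. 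Either way $r-1\leq n+i-1$, i.e. $r\leq n+i$, contradicting $r\geq n+i+1$. (The base case $i=1$ is the same computation: there $v$ is forced to be $1$, which by Sundaram lies in a row $\leq n$.) This shows $c(T)\in\operatorname{SSYT}(\lambda,\phi)$ for every $T\in\operatorname{LRS}(\nu/\mu,\lambda)$, and together with the converse it identifies $c\big(\operatorname{LRS}(\nu/\mu,\lambda)\big)$ with $\operatorname{LR}^{\nu}_{\mu,\lambda}\cap\operatorname{SSYT}(\lambda,\phi)$, which is the asserted set of $\mu$-dominant flagged tableaux.

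The main obstacle is precisely obtaining the sharp constant $n+i$ for the even entries: a crude estimate such as ``an entry $\ell$ of an LR tableau of shape $\nu/\mu$ lives in rows $\ell,\dots,\ell+\ell(\mu)$'' only gives rows $\leq 2i+n$, which is too weak. What makes the sharp bound work is the interplay of three facts — column-strictness forces the entry just above a $2i$ to be $\leq 2i-1$; a column of $T$ cannot drop into $\mu$ in any row exceeding $n$; and $\lfloor v/2\rfloor\leq i-1$ for every $v\leq 2i-1$ — so that the single cell directly above an offending $2i$ already produces the contradiction. A secondary point to be careful about is the bookkeeping at the boundary: one should record that $\phi$ is a legitimate flag (indeed $k\leq n+\lfloor k/2\rfloor\leq 2n$ for $1\leq k\leq 2n$, since $n+\lfloor k/2\rfloor - k = n-\lceil k/2\rceil\geq 0$), and that the Sundaram condition and the row conditions on $c(T)$ are both vacuous for $j>\ell(\lambda)$, so the two descriptions match on the nose.
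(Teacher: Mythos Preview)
Your proof is correct. The overall shape matches the paper's---Sundaram on $T$ becomes the odd-row flag condition on $c(T)$, and one must then show the even-row bounds come for free---but where the paper dispatches that second step in one line (``the remaining bounds follow from the fact that $c(T)$ is $\mu$-dominant and $\mu$ has at most $n$ parts''), you give an explicit inductive argument inside $T$: the cell directly above an offending $2i$ lies outside $\mu$ (its row index exceeds $n\ge\ell(\mu)$), hence carries a strictly smaller entry whose row is already bounded by $n+i-1$ via Sundaram or the inductive hypothesis. Your route uses only column-strictness of $T$ together with $\ell(\mu)\le n$, not the Yamanouchi property itself; the paper's phrasing points instead to a Yamanouchi argument in $c(T)$ (an entry $m\ge n+i+1$ in row $2i$ would require an $m-1$ already read, but $\mu_{m-1}=0$ and the portion of row $2i$ already read has entries $\ge m$, forcing an $m-1$ into an earlier row), yet unpacking that also forces a descent through earlier even rows. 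The two routes are therefore close cousins, and yours has the merit of being fully spelled out.
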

\begin{proof}
    The image is a $\mu$-dominant tableau of shape $\lambda$ and weight $\nu - \mu$ is known already (see \S~\ref{sec:branching_models}). The Sundaram condition precisely translates under the map $c$ to the fact that given any positive integer $t$, the entries in rows $2t+1$ are bounded above by $n+t$. The remaining bounds follow from the fact that $c(T)$ is $\mu$-dominant and $\mu$ has at most $n$ parts.
\end{proof}

\begin{remark}\label{rem-hive-rest}
By Proposition~\ref{prop_fl_hive_to_tab}, we see that $\mathrm{LRS}(\nu/\mu,\lambda)$ is in one-to-one correspondence with $\mathrm{Hive(\mu, \lambda, \nu, \phi)}$ where $\phi$ is as in Proposition~\ref{prop_LRS_to_fl_tabs}. This gives a new combinatorial model for the branching coefficients $c^\nu _\lambda$, namely, the set of integral points in the (disjoint) union of the flagged hive polytopes $\bigcup \mathrm{Hive}(\mu,\lambda,\nu,\phi)$ where, the union is over all even partitions $\lambda$.
\end{remark}

\begin{proposition}
\label{S}
    Given any $h \in \mathrm{Hive}(\mu, \lambda, \nu, \phi)$, we have $\mathrm{rect}(C(\hat{P}(h))) \in \mathrm{LRK}^{\nu}_{\lambda,\mu}$. 
\end{proposition}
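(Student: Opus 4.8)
The plan is to track the three relevant conditions — being a hive in $\mathrm{Hive}(\mu,\lambda,\nu)$, being $\mu$-dominant of shape $\lambda$ and weight $\nu-\mu$, and satisfying the flag $\phi$ — through the chain of maps $h \mapsto \hat P(h) \mapsto C(\hat P(h)) \mapsto \mathrm{rect}(C(\hat P(h)))$, and to see that the flag condition is exactly what forces the Schützenberger involution of the rectified tableau to satisfy the Kwon property. By Proposition~\ref{prop_LR_symmetry}(2), for any integral $h \in \mathrm{Hive}(\mu,\lambda,\nu)$ the contretableau $C(\hat P(h))$ is a $\mu$-dominant contretableau of shape $\lambda$ and weight $\nu-\mu$, so $\mathrm{rect}(C(\hat P(h)))$ is a $\mu$-dominant semistandard Young tableau of shape $\lambda$ and weight $\nu-\mu$; this already places it in $\mathrm{LR}^\nu_{\lambda,\mu}$. (Here I use that rectification preserves shape-when-straightened, weight, and the $\mu$-dominance condition \eqref{companionprop}, the last because $w(T_\mu)*w(T)$ and $w(T_\mu)*w(\mathrm{rect}(T))$ are Knuth equivalent.) So the only thing left to check is that the evacuation $S\bigl(\mathrm{rect}(C(\hat P(h)))\bigr)$ satisfies the Kwon property, namely that the entries in row $i$ are at least $2i-1$ for $i=1,\dots,n$.

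For this I would use Proposition~\ref{prop_SI}: if $P := \hat P(h)$, then $S(T(P)) = \mathrm{rect}(C(P))$, hence applying $S$ (an involution) gives $T(P) = S(\mathrm{rect}(C(P)))$. Thus the evacuation in question is literally the tableau $T(\hat P(h))$ read off from the northeast differences of the hive. Now the Kwon bound ``entries in row $i$ of $T(\hat P(h))$ are $\geq 2i-1$'' is a statement purely about the GT pattern $\hat P(h)$: the entry $2i-2$ does not occur in row $i$ of $T(P)$ precisely when $p_{2i-2,i} = p_{2i-3,i}$ (with the convention $p_{a,b}=0$ for $b>a$), i.e. when a certain partial-difference of $\hat P(h)$ vanishes. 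Translating $\hat p_{i,j} = h_{m-i+j,m-i} - h_{m-i+j-1,m-i}$ back in terms of hive labels, the vanishing of the relevant successive differences of $\hat P(h)$ is exactly the statement that certain northeast rhombi in the appropriate slanted column of $h$ have content zero. Matching indices, the constraint for row $i$ of $T(\hat P(h))$ corresponds to the top $2n - \phi_k$ northeast rhombi in column $k$ having zero content for the relevant $k$, which is precisely the flag condition defining $\mathrm{Hive}(\mu,\lambda,\nu,\phi)$ with $\phi_k = n + \lfloor k/2\rfloor$.

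Concretely, the key steps, in order, are: (1) invoke Proposition~\ref{prop_LR_symmetry}(2) to get that $\mathrm{rect}(C(\hat P(h)))$ is a $\mu$-dominant SSYT of shape $\lambda$, weight $\nu-\mu$, so lies in $\mathrm{LR}^\nu_{\lambda,\mu}$; (2) invoke Proposition~\ref{prop_SI} to identify $S(\mathrm{rect}(C(\hat P(h)))) = T(\hat P(h))$; (3) express the Kwon property for $T(\hat P(h))$ as a list of equalities among entries of the GT pattern $\hat P(h)$ (a bound on which integers appear in which row); (4) use the definition $\hat p_{i,j} = h_{m-i+j,m-i} - h_{m-i+j-1,m-i}$ to rewrite those equalities as vanishing of contents of specific northeast rhombi of $h$; (5) check the index bookkeeping so that this set of rhombi is exactly the one prescribed by the flag $\phi_k = n + \lfloor k/2\rfloor$, so that membership of $h$ in $\mathrm{Hive}(\mu,\lambda,\nu,\phi)$ is equivalent to the Kwon property. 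The main obstacle I anticipate is step (5), the purely combinatorial index-chasing: one must carefully align the ``$k$-th slanted column / top $m-\phi_k$ rhombi'' description of the flagged polytope with the ``row $i$, entry $2i-1$'' description of the Kwon property under the reindexing built into $\hat P$ and into $T(\cdot)$, and keep straight the off-by-one conventions ($p_{a,b}=0$ for $b>a$, the shift by $m-i$ in $\hat p$, and the floor function $\lfloor k/2\rfloor$). Once the indices are matched the equivalence is immediate, so I would present (5) as a short explicit computation on a generic rhombus rather than a case analysis.
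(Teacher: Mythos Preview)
Your plan is essentially the paper's own argument: invoke Proposition~\ref{prop_LR_symmetry}(2) plus Knuth-invariance of dominance to land in $\mathrm{LR}^\nu_{\lambda,\mu}$, use Proposition~\ref{prop_SI} to reduce the Kwon check to $T(\hat P(h))$, and then translate the flag condition on $h$ into constraints on the GT pattern $\hat P(h)$.

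One point to watch in your steps (3)--(5): the content of a northeast rhombus equals a difference of two entries of $\hat P(h)$ along a \emph{southeast diagonal}, namely $\hat p_{a,b}-\hat p_{a+1,b+1}$, not a difference $\hat p_{2i-2,i}-\hat p_{2i-3,i}$ along a column as your phrasing of step~(3) suggests. The paper closes this gap with an ingredient you do not mention: since $\ell(\mu)\le n$, the bottom row of $\hat P(h)$ satisfies $\hat p_{2n,n+t}=0$ for all $t\ge 1$, and the diagonal equalities coming from the flat rhombi then propagate these zeros up the SE diagonals to force $\hat p_{2i-2,i}=0$, which is exactly the Kwon bound for row $i$. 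Make sure this boundary input from $\ell(\mu)\le n$ appears explicitly in your index computation, since without it the diagonal equalities alone do not pin down the needed vanishing.
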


\begin{proof}
It is a well-known fact that the Knuth relations preserve $\mu$ dominance. Therefore, it follows from Proposition~\ref{prop_LR_symmetry} that $\mathrm{rect}(C(\hat{P}(h))) \in \mathrm{LR}^{\nu}_{\lambda,\mu}$. By Proposition~\ref{prop_SI} it is enough to prove that $T(\hat{P}(h))$ satisfies the Kwon property. 
\begin{figure}[h]
    \centering
    \begin{tikzpicture}
    \node at (0,0) {$\bullet$};
    \node at (1,0) {$\bullet$};
    
    \node at (0,-0.3) {$h_{i+1,j}$};
    \node at (1.6,-0.3) {$h_{i+1,j+1}$};
  
    \node at (0.5,0.5*1.732) {$\bullet$};
    \node at (1.5,0.5*1.732) {$\bullet$};
        
    \node at (0.5,0.7*1.732) {$h_{i,j}$};
    \node at (1.9,0.65*1.732) {$h_{i,j+1}$};
    \draw[ultra thick, draw=black, fill=violet, fill opacity=0.5] (0,0) -- (1,0) -- (1.5,0.5*1.732) -- (0.5,0.5*1.732) -- cycle;
    \end{tikzpicture}
    \caption{}
    \label{fig:one_flat_rhombi}
\end{figure}
Observe that a flat NE rhombus like the one in Figure~\ref{fig:one_flat_rhombi} gives rise to the equality $\hat{p}_{n-j,i-j} = \hat{p}_{n-j+1,i-j+1}$ because the content of the NE rhombus is $h_{i,j} - h_{i+1,j} - (h_{i,j+1} - h_{i+1,j+1}) = \hat{p}_{n-j,i-j} - \hat{p}_{n-j+1,i-j+1} = 0$). Also, we have $\hat{p}_{2n,n+t} = 0$ for all positive integers $t$ (because $\mu$ has atmost $n$ parts). These relations in $\hat{P}(h)$ translate exactly to the Kwon condition on $T(\hat{P}(h))$.
\begin{figure}[h]
    \centering
    \begin{center}
    \begin{tikzpicture}
        \node at (0,0) {$\bullet$};
        \node at (1,0) {$\bullet$};
        \node at (2,0) {$\bullet$};
        \node at (3,0) {$\bullet$};
		\node at (4,0) {$\bullet$};
        \node at (5,0) {$\bullet$};
        \node at (6,0) {$\bullet$};
        
        \node at (0.5,0.5*1.732) {$\bullet$};
        \node at (1.5,0.5*1.732) {$\bullet$};
        \node at (2.5,0.5*1.732) {$\bullet$};
        \node at (3.5,0.5*1.732) {$\bullet$};
        \node at (4.5,0.5*1.732) {$\bullet$};
        \node at (5.5,0.5*1.732) {$\bullet$};
        
        \node at (1,1.732) {$\bullet$};
        \node at (2,1.732) {$\bullet$};
        \node at (3,1.732) {$\bullet$};
        \node at (4,1.732) {$\bullet$};
        \node at (5,1.732) {$\bullet$};

        \node at (1.5,1.5*1.732) {$\bullet$};
        \node at (2.5,1.5*1.732) {$\bullet$};
        \node at (3.5,1.5*1.732) {$\bullet$};
        \node at (4.5,1.5*1.732) {$\bullet$};
        
        \node at (2,2*1.732) {$\bullet$};
        \node at (3,2*1.732) {$\bullet$};
        \node at (4,2*1.732) {$\bullet$};

        \node at (2.5,2.5*1.732) {$\bullet$};
        \node at (3.5,2.5*1.732) {$\bullet$};

        \node at (3,3*1.732) {$\bullet$};
        
        \draw (0,0) -- (6,0);
		\draw (0.5, 0.5*1.732) -- (5.5, 0.5*1.732);
		\draw (1, 1.732) -- (5, 1.732);
		\draw (1.5, 1.5*1.732) -- (4.5, 1.5*1.732);
        \draw (2, 2*1.732) -- (4, 2*1.732);
		\draw (2.5, 2.5*1.732) -- (3.5, 2.5*1.732);

		\draw (0,0) -- (3, 3*1.732);
		\draw (1,0) -- (3.5, 2.5*1.732);
		\draw (2,0) -- (4, 2*1.732);
		\draw (3,0) -- (4.5, 1.5* 1.732);
        \draw (4,0) -- (5, 1.732);
		\draw (5,0) -- (5.5, 0.5* 1.732);

		\draw (1,0) -- (0.5, 0.5*1.732);
		\draw (2,0) -- (1,1.732);
		\draw (3,0) -- (1.5, 1.5*1.732);
        \draw (4,0) -- (2, 2*1.732);
        \draw (5,0) -- (2.5, 2.5*1.732);
        \draw (6,0) -- (3, 3*1.732);

        \draw [ultra thick, draw=black, fill=violet, fill opacity=0.5]
       (0,0) -- (1.5,1.5*1.732) -- (2.5,1.5*1.732) -- (2,1.732) -- (4,1.732) -- (3.5,0.5*1.732) -- (5.5,0.5*1.732) -- (5,0) -- cycle;

       \draw [ultra thick, draw=black]
       (0.5,0.5*1.732) -- (3.5,0.5*1.732);
       \draw [ultra thick, draw=black]
       (1,1.732) -- (2,1.732);
        \draw [ultra thick, draw=black]
       (2,1.732) -- (1,0);
       \draw [ultra thick, draw=black]
       (3,1.732) -- (2,0);
       \draw [ultra thick, draw=black]
       (3.5,0.5*1.732) -- (3,0);
       \draw [ultra thick, draw=black]
       (4.5,0.5*1.732) -- (4,0);
	\end{tikzpicture}
 \begin{tikzpicture}
    \node at (-0.5,-0.5*1.732) {$\hat{p}_{6,1}$};
    \node at (0.5,-0.5*1.732) {$\hat{p}_{6,2}$};
    \node at (1.5,-0.5*1.732) {$\hat{p}_{6,3}$};
    \node at (2.5,-0.5*1.732) {$\hat{p}_{6,4}$};
    \node at (3.5,-0.5*1.732) {$\hat{p}_{6,5}$};
    \node at (3,-1.732) {$0$};
    \node at (4,-1.732) {$0$};
    \node at (4.5,-0.5*1.732) {$\hat{p}_{6,6}$};
    \node at (5,-1*1.732) {$0$};

    \node at (0,0) {$\hat{p}_{5,1}$};
    \node at (1,0) {$\hat{p}_{5,2}$};
    \node at (2,0) {$\hat{p}_{5,3}$};
    \node at (3,0) {$\hat{p}_{5,4}$};
    \node at (4,0) {$\hat{p}_{5,5}$};
 
    \node at (0.5,0.5*1.732) {$\hat{p}_{4,1}$};
    \node at (1.5,0.5*1.732) {$\hat{p}_{4,2}$};
    \node at (2.5,0.5*1.732) {$\hat{p}_{4,3}$};
    \node at (3.5,0.5*1.732) {$\hat{p}_{4,4}$};
    
    \node at (1,1.732) {$\hat{p}_{3,1}$};
    \node at (2,1.732) {$\hat{p}_{3,2}$};
    \node at (3,1.732) {$\hat{p}_{3,3}$};
    
    \node at (1.5,1.5*1.732) {$\hat{p}_{2,1}$};
    \node at (2.5,1.5*1.732) {$\hat{p}_{2,2}$};
        
    \node at (2,2*1.732) {$\hat{p}_{1,1}$};
    
    \draw [double] (2.2,1.8*1.732) -- (2.3,1.7*1.732);
    \draw [double] (2.7,1.3*1.732) -- (2.8,1.2*1.732);
    \draw [double] (3.2,0.8*1.732) -- (3.3,0.7*1.732);
    \draw [double] (3.7,0.3*1.732) -- (3.8,0.2*1.732);
    \draw [double] (4.2,-0.2*1.732) -- (4.3,-0.3*1.732);
    \draw [double] (4.7,-0.7*1.732) -- (4.8,-0.8*1.732);

    \draw [double] (2.2,0.8*1.732) -- (2.3,0.7*1.732);
    \draw [double] (2.7,0.3*1.732) -- (2.8,0.2*1.732);
    \draw [double] (3.2,-0.2*1.732) -- (3.3,-0.3*1.732);
    \draw [double] (3.7,-0.7*1.732) -- (3.8,-0.8*1.732);

    \draw [double] (2.2,-0.2*1.732) -- (2.3,-0.3*1.732);
    \draw [double] (2.7,-0.7*1.732) -- (2.8,-0.8*1.732);
\end{tikzpicture}
\end{center}
\caption{A flagged hive $h$ and the corresponding GT pattern $\hat{P}(h)$.}
\label{fig_fhives_to_NE-GT}
\end{figure}
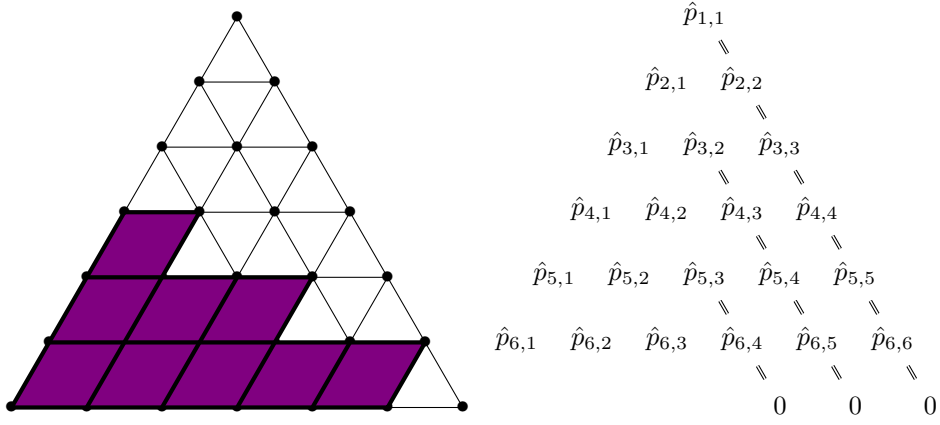

\end{proof}

We now restate and prove Theorem~\ref{maintheorem}:
\begin{theorem}
\label{final}
The following map is a bijection:
    \begin{equation}
      \op{rect} 
  \circ \ C 
 \circ \hat{P} \circ \varphi \circ c: \ \mathrm{LRS}(\nu/\mu, \lambda) \longrightarrow{} \mathrm{LRK}^{\nu}_{\lambda,\mu}.
    \end{equation}
\end{theorem}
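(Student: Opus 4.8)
\emph{Proof plan.} The plan is to recognize the displayed map $F:=\op{rect}\circ C\circ\hat{P}\circ\varphi\circ c$ (which is the composite $U\circ c$ of Theorem~\ref{maintheorem}) as a chain of bijections that is already in place, and then to pin down where it sends the Sundaram subset. First I would note that as a map of the full sets $F$ is a bijection $\mathrm{LR}(\nu/\mu,\lambda)\to\mathrm{LR}^\nu_{\lambda,\mu}$: the companion map $c$ is a bijection onto $\mathrm{LR}^\nu_{\mu,\lambda}$; the map $\varphi$ is a bijection $\mathrm{LR}^\nu_{\mu,\lambda}\to\mathrm{Hive}(\mu,\lambda,\nu)$ (the paragraph preceding Proposition~\ref{prop_fl_hive_to_tab}); and $\op{rect}\circ C\circ\hat{P}$ is a bijection $\mathrm{Hive}(\mu,\lambda,\nu)\to\mathrm{LR}^\nu_{\lambda,\mu}$ --- this is the Kushwaha--Raghavan--Viswanath symmetry $U=(\op{rect}\circ C\circ\hat{P})\circ\varphi$ of \cite{KRV} transported to the hive side, and one also sees it directly, since $\op{rect}\circ C\circ\hat{P}=S\circ T\circ\hat{P}$ by Proposition~\ref{prop_SI}, a composition of bijections (cf. Proposition~\ref{prop_LR_symmetry}). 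Being a bijection, $F$ restricts to an injection on $\mathrm{LRS}(\nu/\mu,\lambda)$, so everything comes down to showing that its image is \emph{exactly} $\mathrm{LRK}^\nu_{\lambda,\mu}$.

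Next I would trace the image through the first two maps. By Proposition~\ref{prop_LRS_to_fl_tabs}, $c$ carries $\mathrm{LRS}(\nu/\mu,\lambda)$ onto the $\mu$-dominant tableaux of shape $\lambda$ and weight $\nu-\mu$ that lie in $\mathrm{SSYT}(\lambda,\phi)$, with $\phi_k=n+\lfloor k/2\rfloor$; and by Proposition~\ref{prop_fl_hive_to_tab} --- more precisely by the refinement from \cite{KRV} that the map witnessing this count is $\varphi$ itself, so $\varphi$ turns the entry bounds of $\mathrm{SSYT}(\lambda,\phi)$ into the vanishing of the first $2n-\phi_k$ northeast rhombi in column $k$ --- this set is carried by $\varphi$ onto the integral points of $\mathrm{Hive}(\mu,\lambda,\nu,\phi)$. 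Hence
\[ F\bigl(\mathrm{LRS}(\nu/\mu,\lambda)\bigr)=\op{rect}\bigl(C(\hat{P}(\mathrm{Hive}(\mu,\lambda,\nu,\phi)))\bigr), \]
and it remains only to identify the right-hand side with $\mathrm{LRK}^\nu_{\lambda,\mu}$.

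The inclusion $\subseteq$ is exactly Proposition~\ref{S}. For the reverse inclusion, since $\op{rect}\circ C\circ\hat{P}$ is a bijection $\mathrm{Hive}(\mu,\lambda,\nu)\to\mathrm{LR}^\nu_{\lambda,\mu}$, it suffices to establish the converse of Proposition~\ref{S}: if $h\in\mathrm{Hive}(\mu,\lambda,\nu)$ and $\op{rect}(C(\hat{P}(h)))\in\mathrm{LRK}^\nu_{\lambda,\mu}$ then $h\in\mathrm{Hive}(\mu,\lambda,\nu,\phi)$. The computation in the proof of Proposition~\ref{S} already gives this once one observes it is an \emph{equivalence} at each step: by Proposition~\ref{prop_SI} the evacuation of $\op{rect}(C(\hat{P}(h)))$ equals $T(\hat{P}(h))$; the Kwon property for $T(\hat{P}(h))$ (smallest entry in row $i$ at least $2i-1$) holds iff the entries $\hat{p}_{j,i}$ of $\hat{P}(h)$ with $i\le j\le 2i-2$ all vanish, for each $i$; and, via the dictionary ``content of a northeast rhombus $=$ a difference of two entries of $\hat{P}(h)$'' together with the relations $\hat{p}_{2n,n+t}=0$ (automatic since $\ell(\mu)\le n$), these vanishings are equivalent to the contents of exactly the first $2n-\phi_k$ northeast rhombi in each slanted column $k$ being zero --- that is, to $h\in\mathrm{Hive}(\mu,\lambda,\nu,\phi)$ for the same flag $\phi_k=n+\lfloor k/2\rfloor$ that arose on the Sundaram side. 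Applying the bijection $\op{rect}\circ C\circ\hat{P}$ now yields $\op{rect}(C(\hat{P}(\mathrm{Hive}(\mu,\lambda,\nu,\phi))))=\mathrm{LRK}^\nu_{\lambda,\mu}$, completing the proof.

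\emph{Where the difficulty lies.} Everything through the tracing of $c$ and $\varphi$ is bookkeeping, but it must be done attentively: keeping straight which of $\lambda,\mu$ sits in which argument of $\varphi$, of $\mathrm{Hive}(\cdot,\cdot,\cdot,\phi)$, and of Propositions~\ref{prop_LR_symmetry} and \ref{prop_fl_hive_to_tab}, and tracking the interchange of shape and weight produced by $C$, $\hat{P}$ and the Sch\"utzenberger involution. The genuinely new point is the reverse inclusion: Proposition~\ref{S} gives only one direction, so the hard part will be verifying that the passage ``flat northeast rhombi $\leftrightarrow$ equalities among entries of $\hat{P}(h)$ $\leftrightarrow$ lower bounds on the row entries of $T(\hat{P}(h))$'' is reversible, and that the flag it forces is precisely $\phi_k=n+\lfloor k/2\rfloor$. (Should one prefer to lean on the branching rules themselves --- Theorems~\ref{sundaram} and \ref{kwon} --- the reverse inclusion can be skipped: Proposition~\ref{S} makes $F$ an injection $\mathrm{LRS}(\nu/\mu,\lambda)\hookrightarrow\mathrm{LRK}^\nu_{\lambda,\mu}$ for every even $\lambda$, while $\sum_{\lambda}|\mathrm{LRS}(\nu/\mu,\lambda)|=c^\nu_\mu=\sum_{\lambda}|\mathrm{LRK}^\nu_{\lambda,\mu}|$ forces each such injection of finite sets to be onto; but the direct argument is preferable, as it does not presuppose the branching rules the bijection is meant to relate.)
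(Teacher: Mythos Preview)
Your proof is correct and follows the paper's own route: trace $\mathrm{LRS}$ through $c$ (Proposition~\ref{prop_LRS_to_fl_tabs}) and $\varphi$ (Proposition~\ref{prop_fl_hive_to_tab}) into the flagged hive polytope, then apply Proposition~\ref{S}. The paper is terser about surjectivity --- it writes ``since all the maps involved here are invertible, it is enough to show that the image is a subset of $\mathrm{LRK}^\nu_{\lambda,\mu}$'' and stops after invoking Proposition~\ref{S}, effectively relying on the word ``exactly'' at the end of that proposition's proof to carry the converse --- whereas you spell out that the dictionary between flat northeast rhombi and the Kwon property on $T(\hat P(h))$ is a genuine two-way equivalence, and you also record the alternative cardinality shortcut via Theorems~\ref{sundaram} and~\ref{kwon}. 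So your treatment follows the same path but is strictly more explicit on the one point the paper leaves to the reader.
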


\begin{proof}
Since all the maps involved here are invertible, it is enough to show that the image is a subset of $\mathrm{LRK}^\nu _{\lambda, \mu}$. By Proposition \ref{prop_LRS_to_fl_tabs} the companion map $c$ induces a bijection between $\op{LRS}(\nu/\mu,\lambda)$ and $\op{SSYT}(\lambda, \phi) \cap \op{LR}^{\nu}_{\mu,\lambda}$. By Proposition \ref{prop_fl_hive_to_tab}, the map $\varphi$ induces a bijection between $\op{SSYT}(\lambda, \phi) \cap \op{LR}^{\nu}_{\mu,\lambda}$ and $\mathrm{Hive}(\mu, \lambda, \nu, \phi)$. Finally apply Proposition \ref{S}. 

\end{proof}
\newpage
\noindent
We work out our bijection for an example in detail: 
\begin{example}
Let $n=3$ and \[T = \Skew(0:0, 0,1,1,1|0:0, 1,2,2|0: 0, 2,3|0: 2,3,4) \in \operatorname{LRS}(\nu/\mu,\lambda)\] where $\nu = (5,4,3,3), \mu = (2,1,1), \hbox{ and }\lambda = (4,4,2,1)$. 

Then the companion tableau $c(T)$ is given by
\[c(T) = \Skew(0: 1,1,1,2|0: 2,2,3,4|0: 3,4|0: 4) \in \op{LR}^{\nu}_{\mu,\lambda}\]

The corresponding hive $h: = \varphi(c(T))$ and GT pattern $\hat{P}(h)$ turn out to be, respectively:
\begin{figure}[h]
    \centering
    \begin{tikzpicture}
    \node at (5,-1.732) {$4$};
    \node at (6,-1.732) {$8$};
    \node at (7,-1.732) {$12$};
    \node at (8,-1.732) {$14$};
    \node at (9,-1.732) {$15$};
    \node at (10,-1.732) {$15$};
    \node at (11,-1.732) {$15$};

    \node at (5.5,-0.5*1.732) {$4$};
    \node at (6.5,-0.5*1.732) {$8$};
    \node at (7.5,-0.5*1.732) {$12$};
    \node at (8.5,-0.5*1.732) {$14$};
    \node at (9.5,-0.5*1.732) {$15$};
    \node at (10.5, -0.5*1.732) {$15$};

    \node at (6,0) {$4$};
    \node at (7,0) {$8$};
    \node at (8,0) {$12$};
    \node at (9,0) {$14$};
    \node at (10,0) {$15$};
    
    \node at (6.5,0.5*1.732) {$4$};
    \node at (7.5,0.5*1.732) {$8$};
    \node at (8.5,0.5*1.732) {$11$};
    \node at (9.5,0.5*1.732) {$12$};

    \node at (7,1.732) {$3$};
    \node at (8,1.732) {$7$};
    \node at (9,1.732) {$9$ };

    \node at (7.5,1.5*1.732) {$2$};
    \node at (8.5,1.5*1.732) {$5$};
        
    \node at (8,2*1.732) {$0$};
\end{tikzpicture}~\begin{tikzpicture}

    \node at (3.5,0) {$\null$};
    
    \node at (5.5,-0.5*1.732) {$2$};
    \node at (6.5,-0.5*1.732) {$1$};
    \node at (7.5,-0.5*1.732) {$1$};
    \node at (8.5,-0.5*1.732) {$0$};
    \node at (9.5,-0.5*1.732) {$0$};
    \node at (10.5, -0.5*1.732) {$0$};

    \node at (6,0) {$2$};
    \node at (7,0) {$1$};
    \node at (8,0) {$0$};
    \node at (9,0) {$0$};
    \node at (10,0) {$0$};
    
    \node at (6.5,0.5*1.732) {$2$};
    \node at (7.5,0.5*1.732) {$1$};
    \node at (8.5,0.5*1.732) {$0$};
    \node at (9.5,0.5*1.732) {$0$};

    \node at (7,1.732) {$2$};
    \node at (8,1.732) {$0$};
    \node at (9,1.732) {$0$ };

    \node at (7.5,1.5*1.732) {$0$};
    \node at (8.5,1.5*1.732) {$0$};
        
    \node at (8,2*1.732) {$0$};
\end{tikzpicture}
\end{figure}

The Young tableau and the contretableau corresponding to $\hat{P}(h)$ are 
\[ T(\hat{P}(h)) = \Skew(0: 3,3 | 0: 4 |0: 6) \hbox{ and } C(\hat{P}(h)) = \Skew(0:0, 1 | 0:0, 3 |0: 4,4).\]

Finally, the rectification of the contretableau $C(\hat{P}(h))$ is \[\operatorname{rect}(C(\hat{P}(h)))) = \Skew(0: 1,4 | 0: 3 |0: 4) \in LRK^{\nu}_{\lambda,\mu}\]
\end{example}

\section{Acknowledgements}

J.T. was supported by the grant SONATA NCN UMO-2021/43/D/ST1/02290 and partially supported by the grant MAESTRO NCN-UMO-2019/34/A/ST1/00263.

\bibliographystyle{alpha}


\end{document}